\newtheorem{theorem}{Theorem}
\newtheorem{proposition}[theorem]{Proposition}
\newtheorem{corollary}[theorem]{Corollary}
\theoremstyle{definition}
\newtheorem{definition}[theorem]{Definition}
\theoremstyle{remark}
\begin{document}
	
\title{Homological Polynomial Coefficients and the Twist Number of Alternating Surface Links}

\author{David A. Will}

\date{}

\maketitle

\begin{abstract}
	For $D$ a reduced alternating surface link diagram, we bound the twist number of $D$ in terms of the coefficients of a polynomial invariant. To this end, we introduce a generalization of the homological Kauffman bracket defined by Krushkal. Combined with work of Futer, Kalfagianni, and Purcell, this yields a bound for the hyperbolic volume of a class of alternating surface links in terms of these coefficients.
\end{abstract}
	
\section{Introduction}

Since its introduction in \cite{jones}, the Jones polynomial has been of key interest in the ongoing effort to find explicit relations between quantum invariants of a link and the geometry or topology of the link complement. It is in this spirit that Dasbach and Lin proved a ``volume-ish theorem" for alternating, prime, hyperbolic knots in \cite{volume-ish} which presents upper and lower bounds for the volume of the knot complement in terms of certain coefficients of the Jones polynomial of the knot. This result was obtained in two steps: first, the authors bound the coefficients in terms of the twist number of the link diagram, and second, work of Lackenby, Agol, and Thurston in \cite{lackenby} bounds the volume in terms of the twist number. Futer, Kalfagianni, and Purcell extended both steps to adequate links in \cite{dehn_filling}. \\

The latter of these steps was recently extended to alternating links in higher-genus surfaces by Kalfagianni and Purcell in \cite{links_surfaces}. It is then natural to consider the first step in the same setting. The difficulty, however, is that the classical Jones polynomial does not capture sufficient information about the embedding of links in surfaces. In this paper, we opt for a three-variable generalization of the homological polynomial defined by Krushkal in \cite{krushkal}.\\

Our polynomial, denoted $\langle D \rangle_\Sigma$, is a Laurent polynomial in $\mathbb{Z}[A^{\pm 1},Z,W]$. Here, $A$ is the usual Kauffman polynomial variable, while $Z$, $W$ record homological info for $D$ on the surface. We will be interested in coefficients of terms having certain fixed degrees in the $A$ variable with minimal degrees in the $Z$ and $W$ variables. The coefficients for the second largest and second smallest degree terms in $A$ are denoted $\alpha_{(1)}'$ and $\beta_{(1)}'$, while those for the third-largest and third-smallest are denoted $\alpha''_{(0)}$ and $\beta''_{(0)}$, respectively. The main purpose of the paper is to establish the following bound for these coefficients in terms of the twist number by generalizing the method in \cite{dehn_filling}. The precise definitions of the polynomial and coefficients are given in Sections \ref{def}, \ref{comp}.\\

\begin{theorem}\label{main theorem}
	Let $\Sigma$ be a closed, orientable surface and let $D$ be a reduced alternating link diagram on $\Sigma$ such that every twist region of $D$ has at least three crossings. Let $\star = \alpha_{(1)}' + \beta_{(1)}' - \alpha''_{(0)} - \beta''_{(0)} + 2$. Then,
	\begin{equation}\label{bounds}
	\frac{1}{3}tw(D) + 1 - g(\Sigma) \leq \star \leq 2 tw(D).
	\end{equation}
\end{theorem}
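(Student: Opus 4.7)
The approach follows the method of Dasbach--Lin and Futer--Kalfagianni--Purcell in \cite{dehn_filling}, adapted to the surface setting by tracking the homological data recorded in $Z$ and $W$. My first task is to derive explicit combinatorial formulas for each of $\alpha'_{(1)}, \beta'_{(1)}, \alpha''_{(0)}, \beta''_{(0)}$ from the state sum defining $\langle D \rangle_\Sigma$. The extreme-in-$A$ terms come from the all-$A$ and all-$B$ resolutions, while the subleading coefficients come from states obtained by toggling one or two crossings, respectively. Requiring minimal degree in $Z$ and $W$ should restrict attention to states whose state circles are null-homologous on $\Sigma$, which one expects to be the generic behavior near the extreme states under the reduced alternating hypothesis; this is where the homological refinement of Krushkal's bracket does the essential work compared to the classical setting.

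Next, I would repackage these data as invariants of the reduced all-$A$ and all-$B$ state graphs $\mathbb{G}'_A, \mathbb{G}'_B \subset \Sigma$, in which parallel edges coming from a single twist region are collapsed to a single edge. By analogy with \cite{dehn_filling}, I would expect $\star$ to equal (up to a connectivity offset absorbed by the $+2$) the sum of the first Betti numbers $b_1(\mathbb{G}'_A) + b_1(\mathbb{G}'_B)$. The upper bound $\star \leq 2\, tw(D)$ would then follow from the observation that each twist region contributes a single edge to exactly one of the two reduced state graphs, giving at most $tw(D)$ edges per side, and the first Betti number of a graph is bounded by its edge count.

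For the lower bound I would combine the hypothesis that each twist region has at least three crossings (which plays the role of adequacy here and ensures that distinct twist regions contribute distinct edges in the reduced state graphs) with an Euler-characteristic argument for the embedding of $\mathbb{G}'_A \cup \mathbb{G}'_B$ inside $\Sigma$. In the classical planar setting this recovers $b_1(\mathbb{G}'_A) + b_1(\mathbb{G}'_B) \geq \tfrac{1}{3} tw(D) + 1$; the surface correction arises because the complement of the state graphs in $\Sigma$ may contain non-disk regions, whose total contribution to the Euler-characteristic defect is bounded by $g(\Sigma)$.

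The main obstacle, in my view, will be this lower bound and its dependence on $g(\Sigma)$: one must control the topology of the union $\mathbb{G}'_A \cup \mathbb{G}'_B \subset \Sigma$ precisely enough to show that the surface correction is exactly $-g(\Sigma)$ rather than something worse. A secondary technical point is verifying that states carrying a homologically nontrivial state circle cannot accidentally contribute to a coefficient of minimal $Z,W$ degree once the homological weights are expanded; this will follow from the state sum definition given in Section \ref{def}, but requires a careful case analysis on which single-toggle and double-toggle states can produce nullhomologous circle collections near the extreme states.
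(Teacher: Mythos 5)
The proposal starts in the right place (extract coefficients, repackage as state-graph data, mimic Futer--Kalfagianni--Purcell), but it contains a genuine gap that would derail the argument: you assume that imposing minimal $Z,W$ degree forces contributing states to be ``null-homologous throughout,'' and hence that $\star$ equals the planar quantity $b_1(\mathbb{G}'_A)+b_1(\mathbb{G}'_B) = e_A+e_B-v_A-v_B+2$. That is not what happens. In the surface setting the coefficient $\alpha'_{(1)}$ is actually the coefficient of $ZW\,A^{c(D)+2v_A-2}$ (so already has nontrivial $Z$ and $W$ degree, coming from states with one essential loop pair), and more importantly the coefficients $\alpha''_{(0)},\beta''_{(0)}$ acquire extra correction terms $\pitchfork_A^*-\tau_A^*$ and $\pitchfork_B^*-\tau_B^*$ counting equivalence classes of \emph{transverse pairs} and \emph{self-triangles} of self-edges. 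These come precisely from states whose intermediate resolutions pass through homologically essential loops that later cancel --- e.g.\ two adjacent self-edges with intersection number $\pm 1$ whose surgeries create and then destroy a pair of essential loops. Such states contribute to the constant-in-$Z$ coefficient $\alpha''_{(0)}$, contrary to your expectation, and they have no analogue in the planar case. As a result $\star = e_A+e_B-v_A-v_B+2 + \tau_A^*+\tau_B^*-\pitchfork_A^*-\pitchfork_B^*$, and without a separate bound $-2g \leq \tau_A^*+\tau_B^*-\pitchfork_A^*-\pitchfork_B^* \leq 0$ (which requires its own genus argument about self-triangles occupying punctured tori in $\Sigma$), neither side of \eqref{bounds} follows from your strategy.

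A secondary issue is in the lower bound. You propose to control the genus correction by an Euler-characteristic defect for the embedding of $\mathbb{G}'_A\cup\mathbb{G}'_B$, but the actual argument works region-by-region through the ``country'' decomposition and replaces the planar spanning-tree step with a spanning \emph{quasi-tree} of a ribbon graph; moreover the planar proof's claim that every component of the 2-valent graph $\Phi$ has at least three vertices fails on surfaces (there can be ``bad'' two-vertex components), and repairing this requires a careful use of homological adequacy to produce boundary-homologous self-edge classes compensating for each bad component. These points are not cosmetic: if you simply transplant the Futer--Kalfagianni--Purcell count, the twist-number term in the lower bound cancels. Your high-level intuition (quasi-tree/Euler defect bounded by $g$) is sound, but you would need to make the bad-component accounting and the transverse-pair/self-triangle analysis explicit before the proof would close.
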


Combined with the result in \cite{links_surfaces}, this yields the following volume bound.\\

\begin{corollary}\label{vol bound}
	Let $\Sigma$ be a closed orientable surface of genus at least one, and let $L$ be a link that admits a twist-reduced weakly generalized cellularly embedded alternating projection $D$ onto $\Sigma \times \{0\}$ in $Y=\Sigma \times [-1,1]$. Then the interior of $Y\setminus L$ admits a hyperbolic structure. If $\Sigma$ is a torus, then we have
	\begin{equation}
	\frac{v_{\text{oct}}}{4}\star\leq vol(Y\setminus L) < 30v_{\text{tet}}\star,
	\end{equation}
	where $v_{\text{tet}}$ is the volume of a regular ideal tetrahedron, and $v_{\text{oct}}$ is the volume of a regular ideal octahedron.\\
	
	If $\Sigma$ has genus at least two, then we have
	\begin{equation}
	\frac{v_{\text{oct}}}{4}\cdot (\star-6\chi(\Sigma)) \leq vol(Y\setminus L) < 18v_{\text{oct}}\cdot (\star + g(\Sigma) -1 ).
	\end{equation}
\end{corollary}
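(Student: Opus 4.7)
The plan is to combine Theorem \ref{main theorem} with the hyperbolic volume estimates of Kalfagianni and Purcell in \cite{links_surfaces}. First I would cite their hyperbolization result to conclude that, under the hypotheses on $D$ (twist-reduced, weakly generalized cellularly embedded, alternating, on a surface of genus at least one), the interior of $Y \setminus L$ admits a hyperbolic structure. Next, I would quote their explicit upper and lower bounds on $vol(Y \setminus L)$: these are affine in $tw(D)$, with prefactors involving the regular ideal octahedron volume $v_{\text{oct}}$ and the regular ideal tetrahedron volume $v_{\text{tet}}$, and with an additional $-3\chi(\Sigma)$ correction term that enters the lower bound only when $g(\Sigma) \geq 2$.

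The core of the proof is then to invert the inequalities of Theorem \ref{main theorem} to trade $tw(D)$ for $\star$. The right-hand inequality $\star \leq 2\, tw(D)$ rearranges to $tw(D) \geq \star/2$, which I would substitute into the Kalfagianni-Purcell lower volume bound. In the torus case this produces the advertised $\frac{v_{\text{oct}}}{4} \star$; in the higher-genus case the $-3\chi(\Sigma)$ correction survives the substitution and gets absorbed into the factor $\frac{v_{\text{oct}}}{4}(\star - 6\chi(\Sigma))$. The left-hand inequality $\frac{1}{3} tw(D) + 1 - g(\Sigma) \leq \star$ rearranges to $tw(D) \leq 3(\star + g(\Sigma) - 1)$, which I would substitute into their upper volume bound. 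For $g(\Sigma) = 1$ the $g(\Sigma) - 1$ correction vanishes and the upper estimate collapses to $30\, v_{\text{tet}}\, \star$; for $g(\Sigma) \geq 2$ it survives to give $18\, v_{\text{oct}}\, (\star + g(\Sigma) - 1)$.

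The main obstacle here is bookkeeping rather than mathematics. I would need to match the precise Kalfagianni-Purcell constants (which I expect to be $\frac{v_{\text{oct}}}{2}$, $10\, v_{\text{tet}}$, and $6\, v_{\text{oct}}$ against $tw(D)$, together with the $3\chi(\Sigma)$ shift) against the prefactors in the statement, and check that the $g(\Sigma) \geq 2$ form of their bounds is the one that is actually used. A secondary point worth confirming is that the three-crossings-per-twist-region hypothesis of Theorem \ref{main theorem} is compatible with, or if necessary implicit in, the hypotheses of the Kalfagianni-Purcell theorems, so that one can apply both inputs to the same diagram $D$ without modification. Beyond this verification, the corollary is a direct consequence of the two inputs, and I expect no genuinely hard step.
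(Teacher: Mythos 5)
Your proposal matches the paper's proof exactly: the paper states the corollary follows by direct substitution of Theorem \ref{main theorem}'s bounds into Theorem 1.4 of \cite{links_surfaces}, and your rearrangements ($tw(D) \geq \star/2$ from the upper bound, $tw(D) \leq 3(\star + g(\Sigma) - 1)$ from the lower bound) plugged into the Kalfagianni--Purcell constants reproduce the stated prefactors in both the torus and higher-genus cases. Your flagged concern about the three-crossings-per-twist-region hypothesis from Theorem \ref{main theorem} is a fair one to raise, but the paper treats it as implicit in the "direct substitution" and does not address it separately either.
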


	We should note that while this paper was in preparation, related results were independently obtained for different choices of polynomials. In \cite{champanerkar2020volumish} Champanerkar and Kofman use the homological Kauffman bracket (see Definition \ref{hom bracket h}), while in \cite{br2020guts} Bavier and Kalfagianni define a polynomial $\langle D \rangle_0$ which agrees with the classical Kauffman bracket, but is defined over states consisting only of contractible loops. In contrast to Theorem \ref{main theorem}, both of these results contain strict equalities between coefficients and the twist number, rather than inequalities. There is, however, an associated cost. In \cite{champanerkar2020volumish} the authors must use a homological version of the twist number, and in \cite{br2020guts} the notion of ``reduced" is stronger which eliminates many of the terms making up $\star$ in \eqref{quant}. Although we achieve only inequalities, our approach has the advantage of recovering the classical twist number without requiring these further conditions.

\subsection{Organization of the paper}
	In Section \ref{def} we give some basic definitions and construct the more general homological polynomial referred to in the introduction. In Section \ref{comp} we define the state graphs $\mathbb{G}_A$ and $\mathbb{G}_B$ and use them to compute certain coefficients of the aforementioned polynomial. Lastly, Section \ref{proofs} contains proofs of Theorem \ref{main theorem} and Corollary \ref{vol bound} obtained by finding bounds on these coefficients in terms of the twist number of a diagram.
	
\subsection{Acknowledgments}
	The author would like to thank Slava Krushkal for many helpful discussions and Ilya Kofman for useful feedback.\\
	
\begin{figure}[h!]
	\centering
	\includegraphics{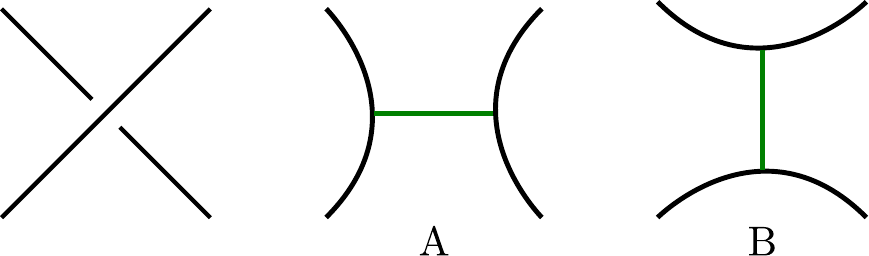}
	\caption{A crossing, an $A$-resolution, and a $B$-resolution.}
\end{figure}

\section{Definitions}\label{def}
Throughout this paper, we will let $\Sigma$ be a closed, orientable surface. We will implicitly view $\Sigma $ as $\Sigma \times \{0\} \subset \Sigma \times [-1,1]$, but since this paper deals primarily with link diagrams, these results will hold for $\Sigma$ in any compact, orientable, irreducible 3-manifold. For a link $L\subset \Sigma \times [-1,1]$, a link diagram $D$ for $L$ on $\Sigma$ is the image of $L$ under the projection, which we may view as a 4-valent graph $\Gamma$ on $\Sigma$. The vertices of $\Gamma$ correspond to double points of the projection, and these are then equipped with crossing information. There are two possible ways to resolve each crossing. Resolving a crossing means replacing $D$ with a new diagram having one fewer crossing which differs from $D$ only in a neighborhood of that crossing. We call these the $A$-resolution and the $B$-resolution, referring to Figure 1. As we resolve crossings, we may draw an arc connecting the strands of the new diagram. These are commonly called ``surgery arcs," as performing surgery along such an arc provides a convenient way to switch between the $A$ and $B$ resolutions of a crossing.\\

If all crossings are resolved, then all that remains of $D$ is a collection $S$ of disjoint simple closed curves on $\Sigma$. We call $S$  a \textit{state}, and denote the set of all states by $\mathcal{S}$. Note that $\lvert \mathcal{S} \rvert = 2^{c(D)}$ where $c(D)$ is the number of crossings of $D$.\\

We will be considering only link diagrams which are alternating and reduced. A link diagram on a surface is \textit{reduced} if it is cellularly embedded and if it contains no nugatory crossings. By cellularly embedded, we mean that the complementary regions of the graph $\Gamma$ are disks. And a crossing is \textit{nugatory} if there exists a separating curve in $\Sigma$ that intersects $D$ only at that crossing as in Figure 2.\\

The polynomial we will be studying is a generalization of the homological Kauffman bracket, defined by Krushkal in \cite{krushkal}. Let $i:S\to\Sigma$ be the inclusion map. ``Homological" refers to the use of the induced map $i_*: H_1(S) \to H_1(\Sigma)$ (with $\mathbb{Z}$ coefficients) which provides additional information about the embedding of each state.\\

\begin{definition}\label{hom bracket h} (\cite{krushkal})
	Let $\alpha(S)$ and $\beta(S)$ denote the number of $A$-resolutions and $B$-resolutions used to obtain the state $S$, respectively. The \textit{homological Kauffman bracket} is a Laurent polynomial $\langle D \rangle_\Sigma^H \in \mathbb{Z}[A^{\pm 1},Z]$ defined by the state-sum
	\begin{equation}
		\langle D \rangle_\Sigma^H = \sum\limits_{S\in \mathcal{S}} A^{\alpha(S)-\beta(S)}(-A^2-A^{-2})^{k(S)} Z^{r(S)},
	\end{equation}
	where $k(S) = \dim(\ker\{i_*: H_1(S) \to H_1(\Sigma)
	\})$ and $r(S) = \dim(\text{im}\{i_*: H_1(S) \to H_1(\Sigma)
	\})$. $k(S)$ is called the \textit{kernel} of the state, while $r(S)$ is called the \textit{homological rank} of the state. We write
	\begin{equation}
		\langle D \mid S \rangle_\Sigma^H = A^{\alpha(S)-\beta(S)}(-A^2-A^{-2})^{k(S)} Z^{r(S)}
	\end{equation}
	for the contribution of the state $S$ to $\langle D \rangle_\Sigma^H$.
\end{definition}

\begin{figure}
	\centering
	\includegraphics[width=3in]{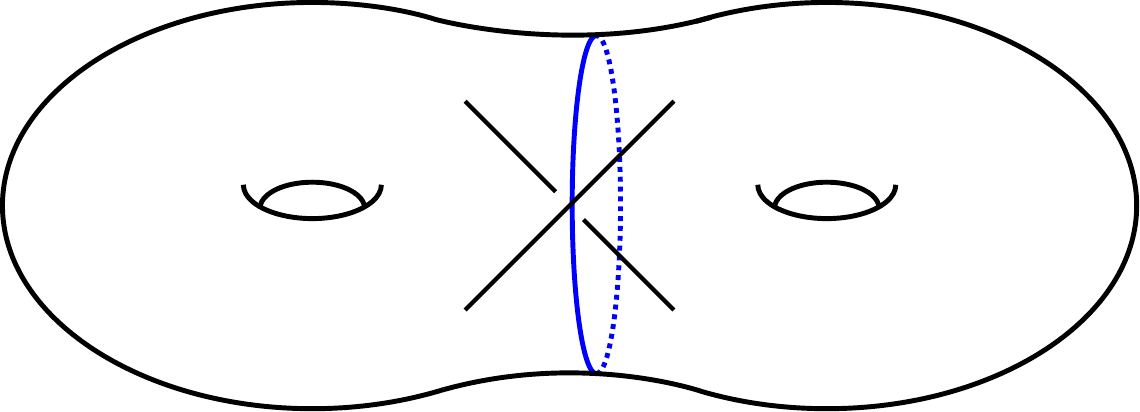}
	\caption{A nugatory crossing.}
\end{figure}

In Lemmas 2.1-2.3 of \cite{b-k}, Boden and Karimi give a proof for the invariance of $\langle D \rangle_\Sigma^H$ under the second and third Reidemeister moves. To obtain invariance under the first Reidemeister move, a homological version of the Jones polynomial may be obtained by substituting $A = t^{-\frac{1}{4}}$ and by setting
\begin{equation}\label{hom jones}
	J_\Sigma^H(t,Z) = (-t^{-\frac{3}{4}})^{w(D)}\langle D \rangle_\Sigma^H,
\end{equation}
where $w(D)$ is the writhe of $D$. Thus, we obtain a link invariant.\\

In \cite{krushkal}, a homological version of the Tutte polynomial is defined for surface graphs, and then the homological Kauffman bracket is a obtained as a specialization of the graph polynomial. More recently, in \cite{krushkal-fendley}, Fendley and Krushkal define a more general version of this graph polynomial for graphs on the torus. More specifically, for a graph with homological rank equal to one, a third variable $W$ is introduced which counts the number of components of the graph which are homologically essential. Inspired by this generalization, we present a version of this polynomial for alternating, cellularly embedded link diagrams, and extend it to higher-genus surfaces.\\

\begin{definition}\label{hom bracket}
	Define $\bar{c}(S)$ to be zero if $r(S) \neq 1$. If $r(S)=1$, then define $\bar{c}(S)$ to be one-half the number of homologically essential loops in $S$. The more general homological Kauffman bracket which will be used throughout this paper is given by
	\begin{equation}
	\langle D \rangle_\Sigma = \sum\limits_{S\in \mathcal{S}} A^{\alpha(S)-\beta(S)}(-A^2-A^{-2})^{k(S)} Z^{r(S)}W^{\bar{c}(S)},
	\end{equation}
	and we similarly write
	\begin{equation}
	\langle D \mid S \rangle_\Sigma = A^{\alpha(S)-\beta(S)}(-A^2-A^{-2})^{k(S)} Z^{r(S)} W^{\bar{c}(S)}
	\end{equation}
	for the contribution of a state.
\end{definition}

The value of a trivial loop is unaffected by the addition of the $W$ variable, so one can verify that Lemmas 2.1-2.3 in \cite{b-k} hold for this more general polynomial as well. From here, one can define a link invariant by making the same substitution and renormalization as in \eqref{hom jones}. This does not affect the coefficients, however, so $\langle D \rangle_\Sigma$ will suffice for our purposes. In Proposition 1.7 of \cite{b-k}, the authors prove that alternating, cellularly embedded, links are checkerboard colorable and that any checkerboard colorable link $L$ has $[L] = 0$ in $H_1(\Sigma \times [-1,1]; \mathbb{Z}/2)$. Under the projection, $L$ is $\mathbb{Z}_2$-homologous to the sum of all loops in any fixed state $S$. Thus, $[S]=0$ in $H_1(\Sigma; \mathbb{Z}/2)$ as well. As a result, if $r(S)=1$, then there must be an even number of essential loops in $S$, so we conclude that $\langle D \rangle_\Sigma \in \mathbb{Z}[A^{\pm 1},Z,W]$.\\

After having expressed certain coefficients of $\langle D \rangle_\Sigma$ in terms of the state graphs, we will form a connection between the coefficients and the twist number of D.\\

\begin{definition}\label{twist number}
	Let $D$ be a reduced link diagram on a surface $\Sigma$. By definition, the complementary regions of $D$ are $n$-gons with $n\geq 2$. A \textit{twist region} is a connected sequence of bigons arranged crossing-to-crossing of maximal length, as in Figure 3. The \textit{twist number} $tw(D)$ denotes the number of twist regions of $D$.
\end{definition}

\begin{figure}
	\centering
	\includegraphics{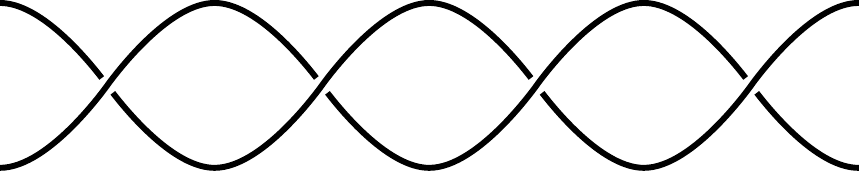}
	\caption{A twist region.}
\end{figure}

%\begin{definition}
%	Let $D$ be a cellularly embedded diagram on a surface $\Sigma$. Let $E$ be a disk on $\Sigma$ such that $\delta E$ intersects $D$ only in neighborhoods of two crossings as in Figure (?). We say that $D$ is twist-reduced if for any such $E$ either $E$ contains a string of bigons, or there is a disk in $\Sigma \setminus E$ opposite $E$ which contains a string of bigons.
%\end{definition}

\section{Computation of coefficients}\label{comp}

We first discuss a few properties of reduced alternating diagrams which result in some nice structure on $\Sigma$. Let $S_A$ denote the all-$A$ state, and let $S_B$ denote the all-$B$ state, each obtained by selecting only that type of resolution. If $D$ is alternating, and $l$ is any loop in either $S_A$ or $S_B$, then all surgery arcs attached to $l$ must lie on the same side of $l$ as seen in Figure 4. For $S\in\mathcal{S}$, let $\mathcal{A}(S)$ denote the collection of surgery arcs for $S$. Consider $U = S \cup \mathcal{A}(S)$. Observe that we can recover the underlying 4-valent graph $\Gamma$ by contracting each arc of $U$ to a point. This contraction induces a homeomorphism between the complementary regions of $U$ and the complementary regions of $\Gamma$. Therefore, if $D$ is cellularly embedded, the regions of $U$ must be disks. As a result, the side of $l$ containing no edges must be one of these disk regions. In other words, every loop of $S_A$ and $S_B$ is contractible. Therefore, we can use the states $S_A$ and $S_B$ to define the following graphs on $\Sigma$.\\

\begin{definition}\label{states + graphs}
	The \textit{all-A state graph} $\mathbb{G}_A$ is a graph embedded on $\Sigma$ whose vertices correspond to the loops of $S_A$ and whose edges correspond to the surgery arcs seen in Figure 1. The \textit{all-B state graph} $\mathbb{G}_B$ is defined similarly for $S_B$.
\end{definition}

Let $v_A$ and $v_B$ denote the number of vertices in $\mathbb{G}_A$ and $\mathbb{G}_B$ (alternatively, the number of loops in $S_A$ and $S_B$), respectively. Since all the loops in $S_A$ and $S_B$ are contractible, they are homologically trivial. This shows the states $S_A$ and $S_B$ contribute
\begin{figure}
	\centering
	\includegraphics{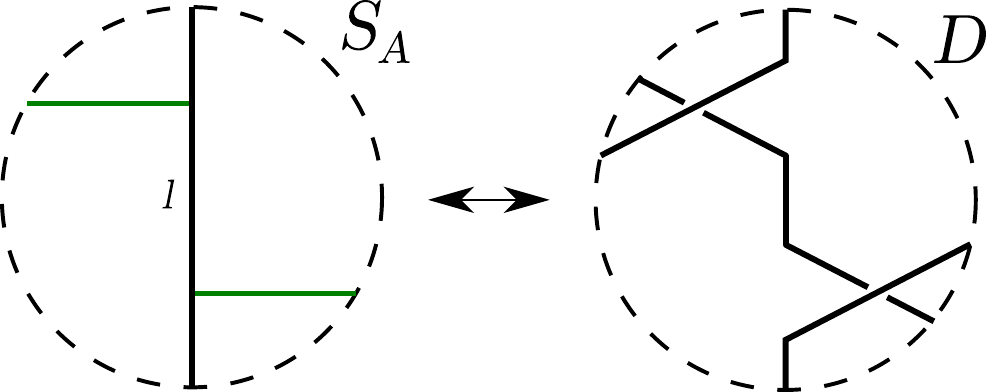}
	\caption{Arcs on opposite sides of $l$ contradict the fact that $D$ is alternating.}
\end{figure}
\begin{align}
	\langle D \mid S_A \rangle_\Sigma &= A^{c(D)}(-A^2-A^{-2})^{v_A}\\
	\langle D \mid S_B \rangle_\Sigma &= A^{-c(D)}(-A^2-A^{-2})^{v_B}
\end{align}
to $\langle D \rangle_\Sigma$. For a polynomial $p \in \mathbb{Z}[A^{\pm 1},Z,W]$, we write $\deg_{\max}^A(p)$ and $\deg_{\min}^A(p)$ for the maximal and minimal degrees of $p$ in the variable $A$, respectively. Note from the above, that
\begin{align}
	\deg_{\max}^A\langle D \mid S_A \rangle_\Sigma &= c(D) + 2v_A \\
	\deg_{\min}^A\langle D \mid S_B \rangle_\Sigma &= -c(D) - 2v_B.
\end{align}
In fact, these degrees are precisely the maximal and minimal degrees in $A$ of the polynomial $\langle D \rangle_\Sigma$. The authors of \cite{b-k} show this for $\langle D \rangle_\Sigma^H$ in their Proposition 2.8 by proving that reduced alternating diagrams on surfaces are homologically adequate.

\begin{definition}\label{adequate}
	A diagram $D$ is \textit{homologically $A$-adequate} if, for any state $S$ with exactly one $B$-resolution, we have $k(S)\leq k(S_A)$. A diagram $D$ is \textit{homologically $B$-adequate} if, for any state $S$ with exactly one $A$-resolution, we have $k(S)\leq k(S_B)$. A diagram $D$ is called \textit{homologically adequate} if it is both homologically $A$-adequate and homologically $B$-adequate.
\end{definition}

Lemma 2.6 of \cite{b-k} shows that for homologically adequate diagrams, $S_A$ and $S_B$ are the unique states which contribute maximal and minimal degree terms. This can be restated in terms of coefficients as the following.

\begin{proposition}\label{first coeff}(Lemma 2.6 in \cite{b-k})
	Let $D$ be a reduced alternating projection of a link $L$ onto a closed, orientable surface $\Sigma$. Then $\langle D \rangle_\Sigma^H$ has unique terms of maximal and minimal degree in the variable $A$ of the form
	\begin{equation}
		(-1)^{v_A} A^{c(D)+2v_A}
	\end{equation}
	and
	\begin{equation}
		(-1)^{v_B} A^{-c(D)-2v_B}.
	\end{equation}
\end{proposition}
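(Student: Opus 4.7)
The plan is to show that $S_A$ is the unique state contributing a term of degree $c(D) + 2v_A$ in $A$, so that its leading coefficient is not cancelled in the state sum; the minimum-degree statement for $S_B$ is then proved symmetrically. Using that every loop of $S_A$ is contractible on $\Sigma$ (as established just before Definition \ref{states + graphs}), one has $k(S_A) = v_A$ and $r(S_A) = 0$, so the contribution simplifies to
\[ \langle D \mid S_A \rangle_\Sigma^H = A^{c(D)}(-A^2 - A^{-2})^{v_A}, \]
whose leading $A$-term is exactly $(-1)^{v_A} A^{c(D)+2v_A}$, with no $Z$ factor. It remains to show this term survives the state sum.

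Let $M(S) := \alpha(S) - \beta(S) + 2k(S)$ denote the maximum $A$-degree of $\langle D \mid S \rangle_\Sigma^H$ (the leading $A$-coefficient in $(-A^2 - A^{-2})^{k(S)}$ being $(-1)^{k(S)}$). Then $M(S_A) = c(D) + 2v_A$, and the goal is $M(S) < M(S_A)$ for every $S \neq S_A$. The key step is to analyze a single flip from an $A$- to a $B$-resolution: if $S \to S'$ by one such flip, then $\alpha - \beta$ drops by $2$, while the number of loops changes by $\pm 1$ (the flip either splits one loop into two or merges two into one). Following the homology classes of the new loops in $H_1(\Sigma)$ case by case, one verifies that $k(S') - k(S) \leq 1$, hence $M(S') \leq M(S)$. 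Thus $M$ is non-increasing along any monotone path of $A \to B$ flips starting from $S_A$.

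To upgrade this to a strict inequality, invoke $A$-adequacy at the first flip: any $S'$ with $\beta(S') = 1$ satisfies $k(S') \leq k(S_A) = v_A$, so $M(S') \leq c(D) - 2 + 2v_A = M(S_A) - 2$. Since any $S \neq S_A$ is reached by further (non-increasing) flips from such an $S'$, we conclude $M(S) \leq M(S_A) - 2$ for every $S \neq S_A$. No cancellation with $S_A$'s leading term is then possible, and the unique top-degree term of $\langle D \rangle_\Sigma^H$ is $(-1)^{v_A} A^{c(D)+2v_A}$ as claimed; the minimum-degree case follows from the same argument with $A \leftrightarrow B$ and $A$-adequacy replaced by $B$-adequacy. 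The main obstacle is the inequality $k(S') - k(S) \leq 1$: in the classical (sphere) setting $k$ equals the loop count, which automatically changes by $\pm 1$, but on a higher-genus $\Sigma$ one must separately track how merging or splitting loops affects the image in $H_1(\Sigma)$ — in a split, the original class in $H_1(\Sigma)$ equals the sum of the two new classes, and in a merge, the two old classes sum to the new one, so $r(S)$ can change by $0$, $1$, or $2$ depending on the linear relations among the loops in $H_1(\Sigma)$. Checking all these subcases is where the homological hypothesis of Definition \ref{adequate} does real work beyond the classical Kauffman-Murasugi argument.
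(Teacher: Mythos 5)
The paper does not actually prove this proposition; it is a citation to \cite{b-k}, combining their Proposition 2.8 (reduced alternating diagrams on surfaces are homologically adequate) with their Lemma 2.6 (for homologically adequate diagrams, $S_A$ and $S_B$ are the unique contributors to the extreme $A$-degrees). Your argument is essentially a reconstruction of the Lemma 2.6 step, and the bookkeeping is sound: writing $M(S) = \alpha(S) - \beta(S) + 2k(S)$, noting $k$ can rise by at most $1$ per $A\to B$ flip (so $M$ is non-increasing), and using adequacy at the first flip to force the strict drop of $2$.

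The genuine gap is that you invoke homological $A$-adequacy (``any $S'$ with $\beta(S')=1$ satisfies $k(S')\le k(S_A)$'') as a given, but the hypothesis of the proposition is only that $D$ is reduced and alternating. The passage from ``reduced alternating on a surface'' to ``homologically adequate'' is nontrivial and is precisely the content of \cite{b-k}'s Proposition 2.8; without it your argument proves the uniqueness statement only under the additional hypothesis of adequacy, not for all reduced alternating $D$. A lesser omission: your dichotomy ``the flip either splits one loop into two or merges two into one'' omits single cycle smoothings, which on a surface preserve the loop count; the paper later rules these out for checkerboard-colorable diagrams (citing \cite{b-k}), and to be complete one should either cite that fact or check directly that $k$ still changes by at most one in that case.
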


Clearly, the addition of the variable $W$ does not affect the degrees in $A$ or $Z$, so the same holds for $\langle D \rangle_\Sigma$. This proposition directly corresponds to the classical result of Kauffman, Thistlethwaite, and Murasugi (\cite{kauffman}, \cite{thistlethwaite}, \cite{murasugi}) which led to a proof of the Tait conjectures using the Jones polynomial.\\

In the classical planar case, Dasbach and Lin in \cite{d-l} as well as Stoimenow in \cite{stoimenow} compute more coefficients. These coefficients have more complicated expressions in terms of data coming from the graphs $\mathbb{G}_A$ and $\mathbb{G}_B$. We will extend their work to links in surfaces using the more general homological Kauffman bracket. First, we set some terminology. Typically, the term ``loop" is used for an edge in a graph which is incident to only one vertex. Since we will be using the word ``loop" to refer to the simple closed curves in a state, we will instead call such an edge a \textit{self-edge}. An edge which is incident to two distinct vertices is called a \textit{simple edge}.

\begin{definition}
	Let $e$ be a self-edge in either $\mathbb{G}_A$ or $\mathbb{G}_B$. $e$ may viewed as a well-defined element $[e]$ of $H_1(\Sigma)$ up to a choice of orientation. We define an equivalence relation $\sim^*$ on self-edges, where $e_1 \sim^* e_2$ if $[e_1]=\pm[e_2]$ and if they are adjacent, meaning incident to the same vertex.\\
	
	Similarly, let $e_1$ and $e_2$ be simple edges in either $\mathbb{G}_A$ or $\mathbb{G}_B$ which are incident to the same pair of vertices. $e_1$ and $e_2$ form a cycle in the graph which may also be viewed as an element of $H_1(\Sigma)$ up to orientation. We define an equivalence relation $\sim$ on simple edges, where $e_1 \sim e_2$ if they form a null-homologous cycle.\\
	
	Let $e_A^*$ and $e_B^*$ denote the number of distinct equivalence classes of self-edges in $\mathbb{G}_A$ and $\mathbb{G}_B$, and let $\tilde{e}_A$ and $\tilde{e}_B$ be the number of distinct equivalence classes of simple edges in $\mathbb{G}_A$ and $\mathbb{G}_B$, respectively.
\end{definition}

\begin{proposition}\label{second coeff}
	Let $D$ be a reduced alternating projection of a link $L$ onto a closed, orientable surface $\Sigma$. Then, the terms of $\langle D \rangle_\Sigma$ having the second-highest degree in the $A$ variable are of the form
	\begin{equation}\label{second degree}
	(-1)^{v_A}\ (\alpha_{(1)}'W + \alpha_{(2)}'W^2 + \cdots +  \alpha_{(N)}'W^N)\ Z \ A^{c(D)+2v_A-2},
	\end{equation}
	where
	\begin{equation}\label{alpha prime}
		\alpha_{(1)}' = e_A^*
	\end{equation}
	and the terms having the second-lowest degree are of the form
	\begin{equation}
		(-1)^{v_B}\ (\beta_{(1)}'W + \beta_{(2)}'W^2 + \cdots + \beta_{(M)}'W^M )\ Z \ A^{-c(D)-2v_B +2},
	\end{equation}
	where
	\begin{equation}
	\beta_{(1)}'= e_B^*
	\end{equation}
	for some $N$, $M \geq 1$.
\end{proposition}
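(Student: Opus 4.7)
The plan is to identify, for each state $S\in\mathcal{S}$, when $\langle D\mid S\rangle_\Sigma$ contributes to the coefficient of $A^{c(D)+2v_A-2}$, and then to sum the contributions, paying attention to cancellations encoded by $\sim^*$. Since $\langle D\mid S\rangle_\Sigma$ has maximum $A$-degree $c(D)-2\beta(S)+2k(S)$ with top coefficient $(-1)^{k(S)}Z^{r(S)}W^{\bar c(S)}$, a state with $j=\beta(S)$ $B$-resolutions can contribute to $A^{c(D)+2v_A-2}$ only when $k(S)=v_A+j-1$; combined with $k(S)+r(S)=|S|$ this forces $|S|=v_A+j-1+r(S)$, already a strong restriction.

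First I would dispatch $S_A$, whose $A$-degrees are $c(D)+2v_A-4m$ and never equal $c(D)+2v_A-2$. Next, for $j=1$, a $B$-resolution at a simple edge of $\mathbb{G}_A$ gives $|S|=v_A-1$ and too-small max degree, so only self-edges of $\mathbb{G}_A$ can contribute. For a self-edge $e$, $|S|=v_A+1$, and homological adequacy (Proposition 2.8 of \cite{b-k}) forces $k(S)\leq v_A$, which combined with $|S|=k(S)+r(S)$ yields $k(S)=v_A$, $r(S)=1$, and $[e]\neq 0$ in $H_1(\Sigma)$. The two newly split loops are then essential with opposite homology classes, so $\bar c(S)=1$, and the state contributes $(-1)^{v_A}ZW\cdot A^{c(D)+2v_A-2}$. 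In particular, no $Z^0$ or $Z^{\geq 2}$ term arises at $j=1$, which gives the $Z^1$-factor asserted in the proposition.

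For $j\geq 2$, analogous degree and homology analysis shows that a contributing state has all $j$ resolved crossings at essential self-edges of $\mathbb{G}_A$ lying in a single $\sim^*$-equivalence class --- that is, attached to a common loop $\ell$ of $S_A$ with proportional homology classes $\pm[e]$ --- since any other configuration either drops $|S|$ below $v_A+j-1+r(S)$ or forces $r(S)\geq 2$ (killing the $W^1Z^1$ term). The key geometric step I would establish is that within such a class the $n$ self-edges are arranged non-crossing on $\ell$, so that $B$-resolving any size-$j$ subset splits $\ell$ into $j+1$ loops with exactly two essential (the outermost parallel pair) and $j-1$ contractible; then $\bar c(S)=1$, $k(S)=v_A+j-1$, and the contribution is $(-1)^{v_A+j-1}ZW$. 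Summing over all $\binom{n}{j}$ size-$j$ subsets of a class yields $\sum_{j=1}^{n}\binom{n}{j}(-1)^{v_A+j-1}=(-1)^{v_A}$ per class, so $\alpha_{(1)}'=e_A^*$. The higher $W^i$ coefficients $\alpha_{(i)}'$ for $i\geq 2$ arise from states whose resolved self-edges span multiple $\sim^*$-classes but share a common homology direction, producing $2i$ essential loops.

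The main obstacle is the geometric verification: proving that same-class self-edges are automatically non-crossing on $\ell$ (using the all-same-side property from Figure 4 together with the cellular and alternating hypotheses) and that the resulting $j+1$ loops split into exactly two essential and $j-1$ contractible components. With this input, the alternating-sum cancellation is an immediate binomial identity. The second-lowest degree statement follows by the symmetric argument, interchanging $A$ with $B$ and $\mathbb{G}_A$ with $\mathbb{G}_B$.
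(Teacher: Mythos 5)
Your plan matches the paper's at the high level: isolate the top-degree contribution of each state, restrict to self-edge resolutions, and run a binomial cancellation within each $\sim^*$-class. But the step you dismiss as ``analogous degree and homology analysis'' for $j\geq 2$ is the technical heart of the proposition, and you have not supplied it. To show that a state with $k(S)=v_A+j-1$, $r(S)=1$, $\bar{c}(S)=1$ must have all $j$ of its $B$-resolved crossings at self-edges in a single $\sim^*$-class, the paper runs an induction along the surgery sequence $S_A=S_0,\dots,S_j=S$ with a genuine two-case analysis according to whether the loop $l'$ hosting $e_{i+1}$ is null-homologous or carries $[e_1]$; in the second case it is exactly the constraint $\bar{c}(S_i)=1$ (forcing precisely two essential loops) together with homological adequacy that makes $e_{i+1}$ adjacent to an earlier edge. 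Without this, you have not distinguished the $\alpha_{(1)}'$ states from the $\alpha_{(i)}'$, $i\geq 2$, states in which resolved self-edges are homologous but attached to different vertices. Relatedly, the uniform $Z^1$-factoring in \eqref{second degree} is only argued for $j=1$: for general $j$ one must observe that the first surgery cannot be a merge (it would drop $k$ below $v_A$, unrecoverable in $j-1$ further steps) and that every subsequent surgery must be a kernel-increasing split, which pins $r(S)=1$ for every contributing state, not merely those with $\beta(S)=1$.

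The ``non-crossing'' geometric claim you flag as the main obstacle is indeed needed --- the binomial sum silently assumes that surgering along any $j$-subset of a $\sim^*$-class yields $k(S)=v_A+j-1$ and $\bar{c}(S)=1$ --- but it is not the hard part and follows quickly from the intersection form: if $e_1,e_2$ are disjoint self-edges on a common loop $\ell$ with $[e_1]=\pm[e_2]$, then the mod-$2$ linking of $\partial e_1$ with $\partial e_2$ along $\ell$ equals $[e_1]\cdot[e_2]\bmod 2=0$, so the endpoint pairs are unlinked and surgery along $e_2$ splits one of the two loops produced by $e_1$ rather than merging them. You should include this, but the substantive gap is the inductive classification of contributing states sketched above.
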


\begin{proof}
	We look first at states which contribute to the second-highest degree. Let $S\in\mathcal{S}$ be any state with at least one $B$-resolution. $S$ contributes
	\begin{equation}
	\langle D \mid S \rangle_\Sigma = A^{c(D) - 2\beta(S)}(-A^2-A^{-2})^{k(S)} Z^{r(S)} W^{\bar{c}(S)}
	\end{equation}
	with
	\begin{equation}\label{contribution}
		\deg_{\max}^A(\langle D \mid S \rangle_\Sigma) = c(D) - 2\beta(S) + 2k(S)
	\end{equation}
	so we need only consider states such that $k(S) - \beta(S)$ is maximal.\\
	
	Suppose $\beta(S) = n \geq1$ and let $e_1, \cdots, e_n$ be the edges of $\mathbb{G}_A$ that correspond to $B$-resolved crossings in $S$, ordered arbitrarily. Surgering along the edges one at a time produces a sequence of states $S_A = S_0, S_1, \cdots, S_n = S$. By homological adequacy, $k(S_1)\leq k(S_0)$ but for any other surgery, $k(S_{i+1}) \leq k(S_i)+1$. $k(S)$ will be maximal if these are all equalities, in which case $k(S)-\beta(S) = v_A + n-1$ and the corresponding degree in the $A$ variable will be $c(D) + 2v_A -2$. We will see that such states have homological rank one, thus proving \eqref{second degree}.\\
	
	We should note here, that in the binomial expansion of $(-A^2-A^{-2})^{v_A}$ in $\langle D \mid S_A \rangle_\Sigma$ all degrees of the variable $A$ will be of the form $c(D) +2v_A -4k$ for some $k$, so $S_A$ will not contribute to these coefficients. We now turn our attention to $\alpha_{(1)}'$, so assume further that $\bar{c}(S)=1$. Our goal is to understand what types of surgeries the edges $e_1, \cdots, e_n$ correspond to. A surgery from $S_i$ to $S_{i+1}$ is one of three types:
	
	\begin{enumerate}
		\item a \textit{merge}, where the total number of loops is reduced- in this case either $k(S_{i+1}) = k(S_i)-1$ and $r(S_{i+1}) = r(S_{i})$, or $k(S_{i+1})=k(S_{i})$ and $r(S_{i+1}) = r(S_{i})-1$
		
		\item a \textit{split}, where the total number of loops is increased- in this case either $k(S_{i+1}) = k(S_i)+1$ and $r(S_{i+1}) = r(S_{i})$, or $k(S_{i+1})=k(S_{i})$ and $r(S_{i+1}) = r(S_{i})+1$
		
		\item a \textit{single cycle smoothing} (defined in Section 2 of \cite{b-k}), where the total number of loops remains the same- in this case $k(S_{i+1}) = k(S_i)$ and $r(S_{i+1}) = r(S_{i})$
	\end{enumerate}

	  The key difference between the surface case and the planar case in \cite{d-l} and \cite{stoimenow} is the possibility that the kernel is preserved between states, as well as the potential for single cycle smoothings. However, as the authors of \cite{b-k} point out, single cycle smoothings cannot occur for checkerboard colorable diagrams. Also since every loop in $S_A = S_0$ is trivial, any merge would reduce the kernel. The only remaining possibility for the first surgery is a split with $k(S_1)=k(S_0)$ and $r(S_{1}) = r(S_{0})+1 = 1$. This means that $e_1$ is a self-edge for a loop $l$ and that surgering $l$ along $e_1$ produces parallel loops $l_1$ and $l_2$ in $S_1$ which are both homologous to $e_1$. Thus, $\bar{c}(S_1)=1$. Each remaining surgery must be a split where $k(S_{i+1}) = k(S_i)+1$, $r(S_{i+1}) = r(S_{i})$, and $\bar{c}(S_{i+1})=\bar{c}(S_i)$.\\
	  
	  Next, we will show by induction that the remaining edges are in the same equivalence class as $e_1$. Suppose that for all $j\leq i$ that $e_j$ is adjacent to $e_1$ and that $i_*(H_1(S_j))=i_*(H_1(S_1))=\langle e_1 \rangle \subset H_1(\Sigma)$. Let $l'$ be the loop in $S_i$ corresponding to the vertex incident to $e_{i+1}$. There are two possibilities.\\
	  
	  \textbf{Case 1} Suppose $l'$ is null-homologous. Because $r(S_{i+1}) = r(S_{i}) = 1$ and $\bar{c}(S_{i+1})=\bar{c}(S_i)$, surgering $l'$ along $e_{i+1}$ must produce two homologically trivial loops, as opposed to two parallel essential loops. Note that by homological adequacy, $l'$ must have been created by a previous surgery, so $e_{i+1}$ must have been adjacent to $e_j$ for some $j\leq i$. Therefore,  $i_*(H_1(S_{i+1}))=\langle e_1 \rangle \subset H_1(\Sigma)$ and the inductive step holds.\\
	  
	  \textbf{Case 2} Suppose $[l'] = [e_1]$. Since $\bar{c}(S_{i}) = \bar{c}(S_{1})=1$, $S_i$ has two essential loops, one of which is $l'$. Thus, $e_{i+1}$ must have been adjacent to $e_j$ for some $j\leq i$. Furthermore, the other essential loop homologous to $e_1$ is unaffected by the surgery. Since $r(S_{i+1}) = r(S_{i})=1$, surgering $l'$ along $e_{i+1}$ must produce one null-homologous loop and one loop homologous to $e_1$. This gives $i_*(H_1(S_{i+1}))=i_*(H_1(S_{i})) =\langle e_1 \rangle $ and thus, the inductive step holds.\\
	  
	  Therefore, $i_*(H_1(S)) = \langle e_1 \rangle \subset H_1(\Sigma)$ and all the edges are adjacent to $e_1$. Changing the order of  edge surgeries so that $e_i$ is done first will still yield the same state $S$, implying $[e_i]=[e_1]$ for all $i$. This means precisely that $e_i \sim^* e_1$ for all $i$.\\
	 
	  Let $S$ be a state whose $B$-resolutions occur within a single equivalence class of self-edges. Let $e_i^A$ be the $i$th of these classes, and let $k_i$ be the number of self-edges in that class. The total contribution of these states is
	  \begin{align*}
	  	&\sum\limits_{i=1}^{e_A^*} \sum\limits_{j=1}^{k_i} {{k_i} \choose j}A^{c(D)-2j}(-A^2-A^{-2})^{v_A + j-1}ZW\\
	  	= &\sum\limits_{i=1}^{e_A^*} \sum\limits_{j=1}^{k_i} {{k_i} \choose j}A^{c(D)-2j}(-1)^{v_A+ j-1}(A^2+A^{-2})^{v_A + j-1}ZW\\
	  	= &\sum\limits_{i=1}^{e_A^*} \sum\limits_{j=1}^{k_i} {{k_i} \choose j}A^{c(D)-2j}(-1)^{v_A + j-1}(A^{2v_A + 2j-2} + \text{l.o.t.})ZW\\
	  	= &\sum\limits_{i=1}^{e_A^*} \sum\limits_{j=1}^{k_i} {{k_i} \choose j}(-1)^{v_A + j-1}A^{c(D) + 2v_A -2}ZW + \text{l.o.t.}\\
	  	= &\sum\limits_{i=1}^{e_A^*} (-1)^{v_A}A^{c(D) + 2 v_A-2}ZW + \text{l.o.t.}\\
	  	= &(-1)^{v_A}e_A^*A^{c(D) + 2 v_A-2}ZW + \text{l.o.t.}
	  \end{align*}
	  proving \eqref{alpha prime}. The second-to-last equality follows from the fact that $\sum\limits_{j=0}^k (-1)^j {k \choose j} = 0$. An analogous argument holds for states whose $A$-resolutions lie in the same equivalence class of self edges and the coefficient $\beta_{(1)}'$.\\
	  
	  In the calculation above, note that in order for $\bar{c}(S)$ to equal one, the self-edges in $S$ must be adjacent. In general, $\bar{c}(S)$ counts the number of vertices which house edges that are homologous, but not equivalent under $\sim^*$. This is illustrated in Figure 5. The numbers $N$ and $M$ in the statement of the proposition are the maximum numbers of these vertices, taken over the set of all homology classes of self-edges.
	  
	   \begin{figure}
	  	\centering 
	  	\includegraphics[width=3in]{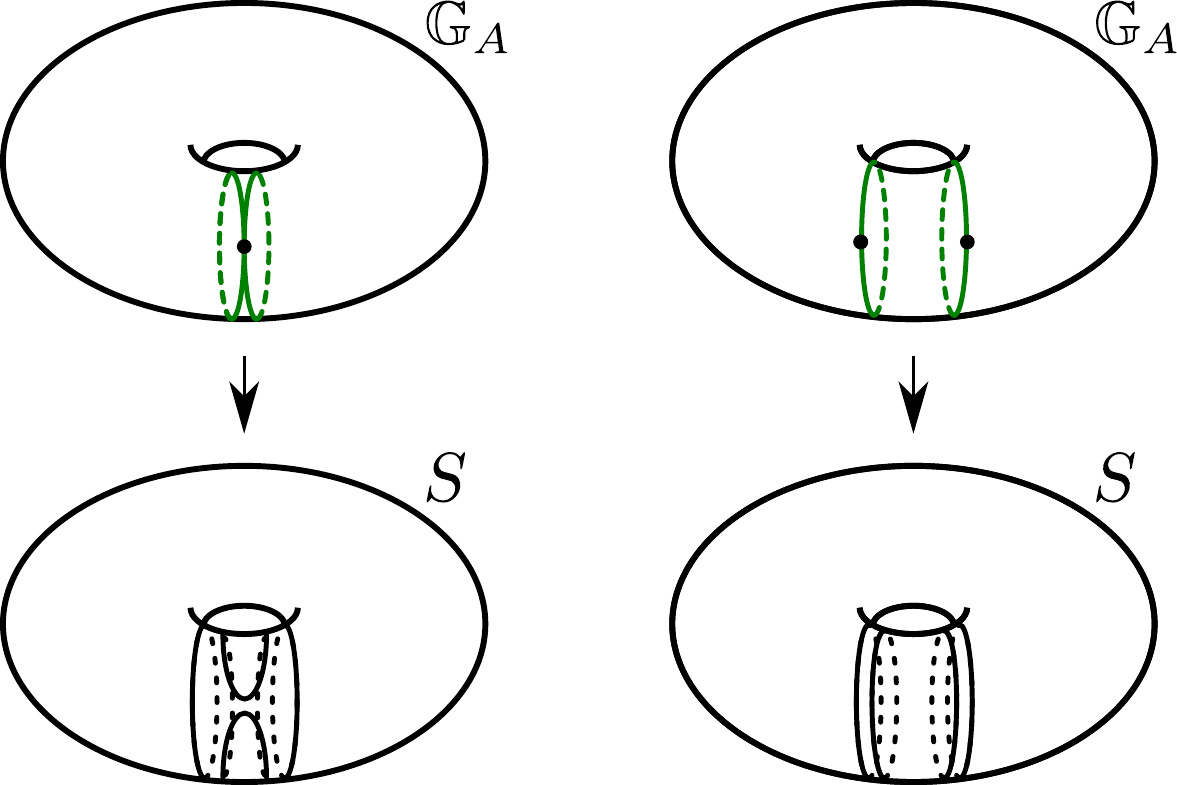}
	  	\caption{Left: Surgering along self-edges in the same equivalence class. Right: Surgering along merely homologous self-edges.}
	  \end{figure}
\end{proof}

Since it is possible for a diagram to have no self-edges, we cannot establish a meaningful lower bound for any combination of $\alpha_{(1)}'$ and $\beta_{(1)}'$. It is then necessary to look at more coefficients.

\begin{definition}
	Let $e_1$ and $e_2$ be adjacent self-edges in either $\mathbb{G}_A$ or $\mathbb{G}_B$ whose homology classes have intersection number $int([e_1],[e_2])=\pm 1$. We call the unordered double $(e_1,e_2)$ a \textit{transverse pair} of self-edges.\\
	
	If $(e_1,e_2)$ is a transverse pair, it is possible that there exists a third self-edge $e_3$ such that $[e_3]=\pm[e_1]\pm[e_2]$. In this case, we call the unordered triple $(e_1,e_2,e_3)$ a \textit{self-triangle}.\\
	
	$\sim^*$ induces equivalence relations on transverse pairs and self-triangles. Let $\pitchfork_A^*$ and $\pitchfork_B^*$ denote the number of equivalence classes of transverse pairs of self-edges, and let $\tau_A^*$ and $\tau_B^*$ denote the number of equivalence classes of self-triangles in $\mathbb{G}_A$ and $\mathbb{G}_B$, respectively. See Figure 6 for an example of a transverse pair and a self-triangle.
\end{definition}

\begin{proposition}\label{third coeff}
	Let $D$ be a reduced alternating projection of a link $L$ onto a closed, orientable surface $\Sigma$. Then, the terms of $\langle D \rangle_\Sigma$ having the third-highest degree in the variable $A$ are of the form
	\begin{equation}
		(-1)^{v_A}(\alpha''_{(0)} + \alpha''_{(2)}Z^2)A^{c(D)+2v_A -4},
	\end{equation}
	where
	\begin{equation}
		\alpha''_{(0)} = v_A -\tilde{e}_A + \pitchfork_A^*-\tau_A^* \text{ and } 
		\alpha''_{(2)} = {e_A^* \choose 2} - \pitchfork_A^*,
	\end{equation}
	and the terms having the third-lowest degree in the variable $A$ are of the form
	\begin{equation}
	(-1)^{v_B}(\beta''_{(0)} + \beta''_{(2)}Z^2)A^{-c(D)-2v_B +4},
	\end{equation}
	where
	\begin{equation}
	\beta''_{(0)} = v_B -\tilde{e}_B + \pitchfork_B^*-\tau_B^* \text{ and } 
	\beta''_{(2)} = {e_B^* \choose 2} - \pitchfork_B^*.
	\end{equation}
\end{proposition}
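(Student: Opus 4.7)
The plan is to extend the strategy of Proposition~\ref{second coeff}: identify every state $S$ whose contribution to $\langle D \rangle_\Sigma$ has $A$-degree equal to $c(D)+2v_A-4$, then assemble their aggregate contribution via the binomial expansion of $(-A^2-A^{-2})^{k(S)}$. Since $\deg_{\max}^A\langle D\mid S\rangle_\Sigma = c(D)-2\beta(S)+2k(S)$, two sources contribute at this $A$-degree: the state $S_A$ itself contributes through the subleading term $(-1)^{v_A}\binom{v_A}{1}A^{c(D)+2v_A-4}$ of $(-A^2-A^{-2})^{v_A}$, accounting for the $v_A$ in $\alpha''_{(0)}$; and each state $S\neq S_A$ with $k(S)-\beta(S)=v_A-2$ contributes through the top term of its own binomial expansion.

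To enumerate the second family I would rerun the surgery-sequence induction of Proposition~\ref{second coeff}, where $S_A=S_0\to\cdots\to S_n=S$ has $k(S_1)\leq k(S_0)$ by homological $A$-adequacy, each subsequent step changes $k$ by at most $+1$, and no step is a single-cycle smoothing. Forcing $k(S)-\beta(S)=v_A-2$ narrows the admissible step patterns to a handful, and tracking $r$ and $\bar c$ along each yields four configurations of $B$-resolved edges in $\mathbb{G}_A$. Pairs of self-edges drawn from two distinct $\sim^*$-classes whose homology classes are linearly independent and non-transverse produce $r(S)=2$ states and the $\binom{e_A^*}{2}$ in $\alpha''_{(2)}$. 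Transverse pairs instead produce $r(S)=0$ states, because after the first $r$-increasing split the second self-edge becomes a simple edge between the newly parallel loops whose merger is null-homologous; these cases simultaneously subtract $\pitchfork_A^*$ from $\alpha''_{(2)}$ and add $\pitchfork_A^*$ to $\alpha''_{(0)}$. Pairs of simple edges lying in the same $\sim$-class realize a merge-then-split yielding only null-homologous loops, and the identity $\sum_j(-1)^j\binom{k}{j}=0$ collapses each class of $k$ parallel simple edges to the single contribution $-\tilde{e}_A$ in $\alpha''_{(0)}$. Finally, self-triangles correct an over-count in the transverse-pair case: each triangle $(e_1,e_2,e_3)$ is detected as three transverse pairs but supplies only one admissible $r=0$ state, producing the $-\tau_A^*$. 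The formulas for $\beta''_{(0)}$ and $\beta''_{(2)}$ then follow by the symmetric analysis for $\mathbb{G}_B$.

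The main obstacle is the case analysis of admissible surgery patterns, since different patterns can produce states with the same underlying edge data but differing $k$, $r$, and $\bar c$; the transverse-pair and self-triangle cases in particular require carefully tracking how the half-edges at each vertex behave when an initial $r$-increasing split converts a self-edge into a simple edge between the two newly parallel loops. A second delicacy is verifying that intermediate states with $r(S)=1$ at this $A$-degree contribute no net $Z^1 W^j$ term, so the coefficient splits cleanly as $\alpha''_{(0)}+\alpha''_{(2)}Z^2$; as in Proposition~\ref{second coeff} this should fall out of telescoping binomial identities applied within each $\sim^*$-class, but the bookkeeping must be done uniformly across all four configurations above.
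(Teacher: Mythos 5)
Your proposal follows essentially the same route as the paper: expand $(-A^2-A^{-2})^{v_A}$ to extract the $v_A$ term from $S_A$, then enumerate the surgery sequences $S_A=S_0\to\cdots\to S_n=S$ with $k(S)-\beta(S)=v_A-2$, sort them into the families (same-$\sim$-class simple edges; transverse self-edge pairs; self-triangles; non-transverse, non-homologous self-edge pairs), and collapse each family via the identity $\sum_j(-1)^j\binom{k}{j}=0$. The family decomposition, the $r$- and $\bar c$-tracking you flag as delicate, and the resulting formulas for $\alpha''_{(0)}$ and $\alpha''_{(2)}$ all match the paper.

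One step of your narrative is not quite right, and you should fix it before writing this out in full: you describe the $-\tau_A^*$ term as \emph{correcting an over-count}, saying each self-triangle ``is detected as three transverse pairs but supplies only one admissible $r=0$ state.'' That is not what happens. Given a self-triangle $(e_1,e_2,e_3)$, the states whose $B$-resolutions lie inside only two of the three $\sim^*$-classes are genuine Case 2(a-I) states, each aggregating to $+(-1)^{v_A}$; all three such pairs really do contribute, so the triangle supplies $3$ to $\pitchfork_A^*$. The $-\tau_A^*$ comes instead from a \emph{new} family of states, those whose $B$-resolutions meet all three classes simultaneously; the triple binomial telescope $\bigl(\sum_{i\geq 1}(-1)^i\binom{k_{e_1}}{i}\bigr)\bigl(\sum_{j\geq 1}(-1)^j\binom{k_{e_2}}{j}\bigr)\bigl(\sum_{k\geq 1}(-1)^k\binom{k_{e_3}}{k}\bigr)=(-1)^3$ gives aggregate sign $(-1)^{v_A-1}$. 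Your ``over-count'' accounting would yield $3-1=2$ per triangle and you would have to argue separately that $1$ is the intended answer; in fact $2$ per triangle is exactly what $\pitchfork_A^*-\tau_A^*$ produces, but it falls out of adding a negatively-signed family rather than subtracting a double-count. Also note the geometric point the paper uses here: once $e_3$ with $[e_3]=\pm[e_1]\pm[e_2]$ exists, no self-edge can connect the remaining pair of opposite arcs (an intersection-number argument), which is what forces every further edge into one of the three existing classes and makes the family closed.
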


\begin{figure}
	\centering
	\includegraphics{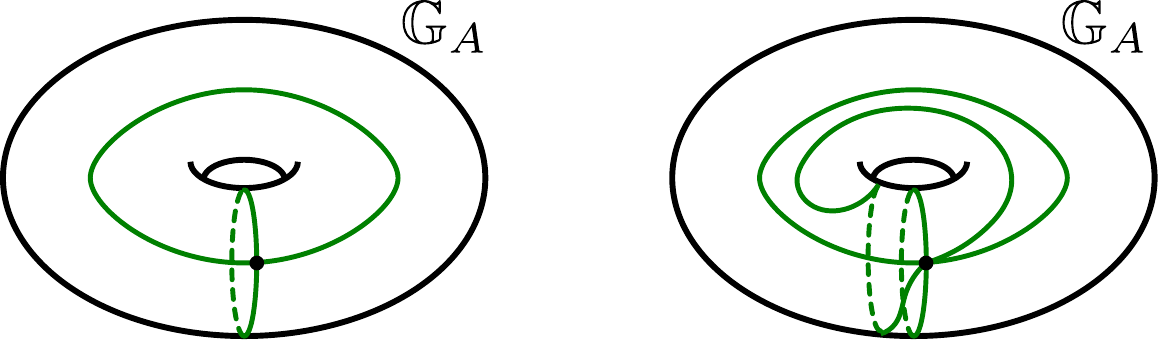}
	\caption{Left: A transverse pair. Right: A self-triangle.}
\end{figure}

Note that these coefficients contain elements of both second and third coefficients from the planar case in \cite{d-l} and \cite{stoimenow}. The reason for this is the possibility for the kernel to be preserved at the first surgery, whereas in the planar case the number of loops can only decrease at the first surgery. For the first surgery, only self-edges preserve the kernel, which is why the terms for self-triangles and pairs of self-edges appear here, while corresponding terms for simple edges do not.

\begin{proof}
	By \eqref{contribution}, the third-highest possible degree of $\langle D\rangle_\Sigma$ in the variable $A$ is $c(D)+2v_A -4$. Notice that this time, $S_A$ will make a contribution. We get
	\begin{align*}
		\langle D \mid S_A \rangle_\Sigma &=
		A^{c(D)}(-A^2-A^{-2})^{v_A}\\
		&= (-1)^{v_A}A^{c(D)}(\text{h.o.t.} + v_A A^{2v_A -4} + \text{l.o.t.})\\
		&= \text{h.o.t.} + (-1)^{v_A}v_A A^{c(D) + 2v_A -4} + \text{l.o.t.}
	\end{align*}
	
	On the other hand, states of the form described in the previous proposition will not contribute anything (the degrees of $A$ in the binomial expansion are of the wrong form: $c(D) + 2v_A -2-4k$, for some $k$).\\
	
	 Now suppose that $S$ is a state with $\deg_{\max}(\langle D \mid S \rangle_\Sigma) = c(D)+2v_A -4$. As in the previous proof, let $e_1, \cdots, e_n$ be a sequence edges of $\mathbb{G}_A$, and $S_A = S_0, S_1, \cdots, S_n = S$ the sequence of states obtained by surgering along each edge in order. As before, we seek to determine which types of surgeries could have yielded $S$. By homological adequacy, $k(S_1)\leq k(S_0)$. There are two cases, depending on whether the first surgery reduces the kernel or preserves the kernel.\\
	
	\textbf{Case 1} If $k(S_1) = k(S_0)-1$, then $e_1$ is a simple edge, whose surgery merges two trivial loops $l_1$ and $l_2$ in $S_0$ into a single trivial loop $l'$. Then $r(S_1) = 0$. In order for the remaining surgeries to yield the desired degree of $A$, they must all be splits which increase the kernel. In particular, $e_2$ must split a trivial loop in $S_1$ into two null-homologous loops. By homological adequacy, this initial loop must be $l'$ and furthermore, in $\mathbb{G}_A$, $e_2$ must have been a simple edge connecting the same two vertices as $e_1$. It follows that $e_1$ and $e_2$ must have formed a null-homologous curve. This means precisely that $e_1 \sim e_2$. By reordering the sequence of edges we see that $e_i\sim e_1$ for all $2\leq i \leq n$. The total contribution of states whose $B$-resolutions occur within a single equivalence class of simple edges is
	\begin{align*}
		\sum\limits_{i=1}^{\tilde{e}_A} \sum\limits_{j=1}^{k_i} & {{k_i} \choose j}A^{c(D)-2j}(-A^2-A^{-2})^{v_A + j-2}\\
		= &(-1)^{v_A-1}\tilde{e}_A A^{c(D) + 2v_A -4} + \text{l.o.t.},
	\end{align*}
	where the equality follows from the same steps used in the proof of Proposition \ref{second coeff}.\\
	
	\textbf{Case 2} If $k(S_1) = k(S_0)$ then $e_1$ was a self-edge as in the previous proposition, and $r(S_1)  = 1$. In order for the state $S$ to contribute to the desired degree of $A$, there must be a $j, 1< j \leq n$ such that $k(S_j) = k(S_{j-1})$, and for all $i \neq 1,j$ we have $k(S_i) = k(S_{i-1}) + 1$ while the homological rank is preserved. Note that the $j$-th surgery could either be a merge resulting in $r(S_n) = r(S_j) = 0$ or a split resulting in $r(S_n) = r(S_j) = 2$. First, let us deal with the scenario that the $j$-th surgery is a merge.\\
	
	\begin{figure}
		\centering
		\includegraphics{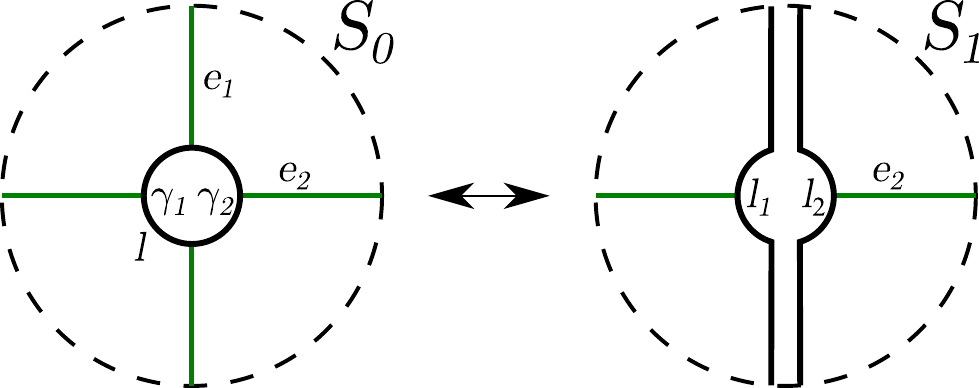}
		\caption{A state in case 2 (a).}
	\end{figure}
	
	\textbf{Case 2 (a)} We first claim that in such a scenario, all of the edges had to be adjacent self-edges. Indeed, proof of the previous proposition implies that $e_i$, $1\leq i<j$ are all homologous and that $H_1(S_i) = \langle e_1 \rangle$ (but we cannot yet conclude they are adjacent). However, since $r(S_j)$ must be zero, surgery along $e_j$ must be a merge which reduces the homological rank. This is only possible if $S_{j-1}$ contains exactly two essential loops $l_1$ and $l_2$, making $\bar{c}(S_{j-1})=1$. We saw that this can only happen if the $e_i$ are adjacent, and therefore belong to the same equivalence class for $1\leq i < j$. Surgery along $e_j$ merges $l_1$ and $l_2$ into a single null-homologous loop $l'$, so $e_j$ must have been adjacent to $e_i$ for some $i<j$. At this point, all loops in $S_j$ are null-homologous, so the remaining surgeries must split off more null-homologous loops. By homological adequacy, $e_i$, $j < i \leq n$ also must be adjacent to previous edges.\\
	
	Now that we know all edges in such a state must be incident to a single vertex $v$ in $\mathbb{G}$, it will suffice to look at a local neighborhood of the loop $l$ in $S_A$ corresponding to $v$. Without loss of generality, assume that $j=2$. As seen in Figure 7, $e_1$ splits $l$ into two arcs, $\gamma_1$ and $\gamma_2$ which correspond to the loops $l_1$ and $l_2$. Since surgery along $e_2$ merges $l_1$ and $l_2$, it must connect $\gamma_1$ and $\gamma_2$ outside of the disk region bounded by $l$. (This can only happen if $g(\Sigma)>0$). In this way, we see that $e_1$ and $e_2$ are transverse.\\
	
	Now, let us address the edges $e_i$, $2 < i \leq n$ whose surgeries increase the kernel. As seen in Figure 8, $e_1$ and $e_2$ divide $l$ into four arcs, $\gamma_k$, $k = \RN{1},\RN{2},\RN{3},\RN{4}$. $l' \in S_2$ is the boundary of a regular neighborhood of $l \cup e_1 \cup e_2$, which may be viewed as the union of corresponding arcs $\gamma_k'$ and two parallel copies of each of the edges $e_1$ and $e_2$. Consider $e_3$. By homological adequacy, the endpoints of $e_3$ must lie in different arcs. This leaves us with two possibilities.\\
	
	\begin{figure}
		\centering
		\includegraphics[width=3.5in]{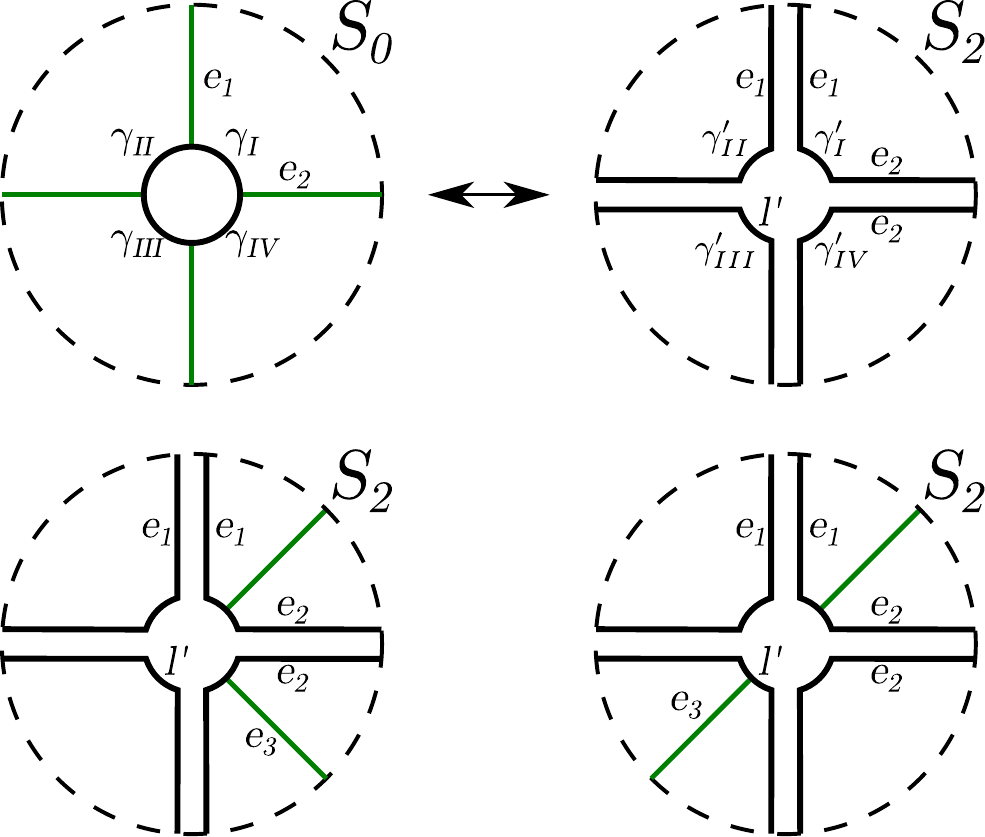}
		\caption{Bottom left: A state in case 2 (a-\Rn{1}). Bottom right: A state in case 2 (a-\Rn{2}).}
	\end{figure}
	
	\textbf{Case 2 (a-\Rn{1})} Suppose that $e_3$ connects two adjacent arcs. Since $k(S_3) = k(S_2) + 1$, surgery along $e_3$ splits off a null-homologous loop from $l'$. In other words, $e_3$ along with an arc in $l'$ forms a null-homologous loop, independent of the choice of arc. As seen in Figure 8, this arc may be taken to be one of the edges $e_i$, $i=1,2$. Thus, $e_3 \sim^*e_i$. If all remaining $e_i$, $3\leq i \leq n$ behave this way, then $S$ is a state whose $B$-resolutions appear within a single equivalence class of the transverse pair $(e_1,e_2)$, which we denote by $(e_1,e_2)^*$. The total contribution of such states is
	\begin{align*}
		\sum\limits_{(e_1,e_2)^*} \sum\limits_{i=1}^{k_{e_1}} \sum\limits_{j=1}^{k_{e_2}} & {{k_{e_1}} \choose i} {{k_{e_2}} \choose j} A^{c(D)-2(i+j)}(-A^2-A^{-2})^{v_A + i + j - 2}\\
		= &(-1)^{v_A}\pitchfork_A^*A^{c(D) + 2 v_A-4} + \text{l.o.t.},
	\end{align*}
	where $k_{e_1}$ and $k_{e_2}$ denote the number of edges in the equivalence classes of $e_1$ and $e_2$, respectively.\\
	
	\textbf{Case 2 (a-\Rn{2})} Suppose that $e_3$ connects two opposite arcs. This time, the arc in $l'$ is a combination of $e_1$ and $e_2$. This means precisely $[e_3] = \pm[e_1]\pm[e_2] \in H_1(\Sigma)$. Thus $(e_1, e_2, e_3)$ is a self-triangle. Note that if such an $e_3$ exists, no edge can connect the other pair of opposite $\gamma_k$ in $l$. Otherwise, if there did exist such an edge $e'$, consider the intersection number $int(e',e_3) = int(e',e_1) + int(e',e_2)$. Since $\mathbb{G}_A$ is embedded in $\Sigma$, we would have $int(e',e_1) = int(e',e_2) = int(e',e_1) + int(e',e_2) = \pm 1$ which is impossible. Therefore, for $3 < i \leq n$ $e_i$ falls under case 2 (a-\Rn{1}) or 2 (a-\Rn{2}) and as such belongs to the equivalence class of either $e_1$, $e_2$, or $e_3$. Then $S$ is a state whose $B$-resolutions appear within a single equivalence class of the self-triangle $(e_1,e_2,e_3)$, which we denote by $(e_1,e_2,e_3)^*$. The total contribution of such states is
	\begin{align*}
	\sum\limits_{(e_1, e_2, e_3)^*} \sum\limits_{i=1}^{k_{e_1}} \sum\limits_{j=1}^{k_{e_2}} \sum\limits_{k=1}^{k_{e_3}}&{{k_{e_1}} \choose i} {{k_{e_2}} \choose j} {{k_{e_3}} \choose k}A^{c(D)-2(i+j+k)}(-A^2-A^{-2})^{v_A + i + j + k - 2}\\
	= &(-1)^{v_A-1}\tau_A^*A^{c(D) + 2 v_A-4} + \text{l.o.t.}
	\end{align*}
	
	\textbf{Case 2 (b)} Lastly, we address the other scenario described at the beginning of Case 2, where the $j$th surgery is a split which increases the homological rank rather than the kernel. The only possibility left is that $e_j$ is a self-edge which neither transverse nor homologous to $e_1$. Once again, all other edges must belong to the  equivalence class of either $e_1$ or $e_j$. Letting $e_1^*$ and $e_2^*$ denote the two classes, the total contribution of these states is
	\begin{align*}
	\sum\limits_{\substack{e_1^* \neq e_2^* \\ \text{ not transverse}}} \sum\limits_{i=1}^{k_{e_1}} \sum\limits_{j=1}^{k_{e_2}} & {{k_{e_1}} \choose i} {{k_{e_2}} \choose j} A^{c(D)-2(i+j)}(-A^2-A^{-2})^{v_A + i + j  - 2}Z^2\\
	= &(-1)^{v_A}\Bigg({e_A^*\choose 2} - \pitchfork_A^*\Bigg)A^{c(D) + 2 v_A-4}Z^2 + \text{l.o.t.}
	\end{align*}
	
	Combining the contributions from $S_A$, Case 1, and Case 2 (a), we get $\alpha''_{(0)} = v_A -\tilde{e}_A + \pitchfork_A^*-\tau_A^*$ as claimed. Likewise, Case 2 (b) gives $\alpha''_{(2)} = {e_A^* \choose 2} - \pitchfork_A^*$. An analogous argument holds for $\beta_{(0)}''$ and $\beta_{(2)}''$.
\end{proof}

\section{Bounds on the twist number}\label{proofs}

The goal of this section is to prove Theorem \ref{main theorem}. Let $e_A = e_A^* + \tilde{e}_A$ and $e_B = e_B^* + \tilde{e}_B$. We consider the quantity
	\begin{align}\label{quant}
		\star &= \alpha_{(1)}' + \beta_{(1)}' - \alpha''_{(0)} - \beta''_{(0)} + 2\\
		&=  e_A + e_B - v_A - v_B + 2 + \tau_A^* + \tau_B^* - \pitchfork_A^* - \pitchfork_B^*.\nonumber
	\end{align}
	
We now seek to bound $\star$ in terms of the twist number of the diagram. We follow the method used for the planar case in \cite{dehn_filling}. As transverse edge pairs (and by extension self-triangles) do not arise in the planar case, we deal with these terms first.
	
	\begin{proposition}
		Suppose $\Sigma$ has genus $g$. Then,
		\begin{equation}\label{g bound}
			-2g \leq \tau_A^* + \tau_B^* - \pitchfork_A^* - \pitchfork_B^* \ \leq 0.
		\end{equation}
	\end{proposition}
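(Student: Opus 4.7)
The plan is to prove the two inequalities in (\ref{g bound}) separately, with the upper bound following directly from the analysis already carried out during the proof of Proposition \ref{third coeff}, and the lower bound requiring a rank-theoretic argument in $H_1(\Sigma;\mathbb{Q})$.

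For the upper bound, I first observe that every self-triangle $(e_1,e_2,e_3)$ contains three transverse pairs: the pair $(e_1,e_2)$ is transverse by definition, and since $[e_3]=\pm[e_1]\pm[e_2]$, bilinearity of the intersection form gives $|\mathrm{int}([e_i],[e_3])|=|\mathrm{int}([e_1],[e_2])|=1$ for $i=1,2$, so $(e_1,e_3)$ and $(e_2,e_3)$ are transverse as well. Next, the intersection-number obstruction appearing in Case 2 (a-II) of Proposition \ref{third coeff} --- the impossibility of simultaneously having $\mathrm{int}(e',e_1)=\mathrm{int}(e',e_2)=\mathrm{int}(e',e_1)+\mathrm{int}(e',e_2)=\pm 1$ --- shows that at any fixed vertex a given transverse pair lies in at most one self-triangle. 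Summing over vertices of $\mathbb{G}_A$ and $\mathbb{G}_B$ gives the stronger inequalities $3\tau_A^*\leq\pitchfork_A^*$ and $3\tau_B^*\leq\pitchfork_B^*$, from which $\tau_A^*+\tau_B^*-\pitchfork_A^*-\pitchfork_B^*\leq 0$ follows immediately.

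For the lower bound, I would pass to $H_1(\Sigma;\mathbb{Q})\cong\mathbb{Q}^{2g}$ equipped with its symplectic intersection form, and attach to each transverse pair $(e_1,e_2)$ the rank-two symplectic subspace $V_{(e_1,e_2)}=\langle[e_1],[e_2]\rangle$. The three transverse pairs within a single self-triangle all span the same such subspace, so $\pitchfork_\bullet^*-\tau_\bullet^*$ admits an interpretation as a controlled count of the rank-two symplectic subspaces arising from $\mathbb{G}_\bullet$. The goal is then to show that the full collection of such subspaces, coming from $\mathbb{G}_A\cup\mathbb{G}_B$ together, spans a subspace of $H_1(\Sigma;\mathbb{Q})$ of rank at most $2g$; after an accounting that respects the factor of two relating rank-two symplectic subspaces to transverse pairs, this will give $\pitchfork_A^*+\pitchfork_B^*-\tau_A^*-\tau_B^*\leq 2g$.

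The hard part is this last rank-$2g$ estimate, in particular the interaction between $\mathbb{G}_A$ and $\mathbb{G}_B$: the alternating hypothesis must be used in an essential way, since a priori nothing prevents each of the two graphs from independently contributing the full symplectic structure of $H_1(\Sigma)$. The key input is the checkerboard coloring --- self-edges of $\mathbb{G}_A$ and $\mathbb{G}_B$ are realized by surgery arcs lying in complementary checkerboard-colored regions of $\Sigma$, which restricts how their homology classes can interact. Combined with the fact that the intersection number of classes represented by disjoint arcs with endpoints on a common contractible boundary circle is confined to $\{0,\pm 1\}$, and with the global Euler-characteristic constraint $\chi(\Sigma)=2-2g$, this structure should force the combined rank-$2g$ bound required.
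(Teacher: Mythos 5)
Your upper bound argument is correct and is essentially the same as the paper's: each self-triangle yields three pairwise-transverse pairs (your bilinearity check, using that the intersection form is alternating so $\mathrm{int}([e_i],[e_i])=0$, is a clean way to see this), and the embedding constraint from Case 2 (a-\Rn{2}) shows a transverse pair determines its triangle. This gives $3\tau_\bullet^*\leq\pitchfork_\bullet^*$, which is more than enough.

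The lower bound, however, is a genuine gap. You sketch a rank-theoretic strategy in $H_1(\Sigma;\mathbb{Q})$ and explicitly acknowledge you have not carried out the key estimate, and the strategy as stated would not obviously succeed. The difficulty is not merely the $\mathbb{G}_A$--$\mathbb{G}_B$ interaction you flag: even within a single state graph, two transverse-pair classes at \emph{different} vertices of $\mathbb{G}_A$ are distinct under $\sim^*$ (which requires adjacency), yet nothing forces their associated rank-two symplectic subspaces of $H_1(\Sigma;\mathbb{Q})$ to be independent --- they could span the same plane --- so a naive rank count does not bound $\pitchfork_A^*-\tau_A^*$. The paper's proof avoids this by working directly with the embedded surface rather than with homology classes: it shows that surgering along a self-triangle class produces a once-punctured torus region $R\subset\Sigma$, with the complement splitting into two punctured surfaces of total genus $g-1$, so iterating gives $\tau_A^*+\tau_B^*\leq g$. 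It then shows, by tracking surgery arcs, that the remaining surgeries carry $R$ to a punctured torus containing no self-edges of $\mathbb{G}_B$, so a self-triangle in one state graph obstructs transverse self-edge classes in the other. A parallel genus-consumption argument bounds the number of transverse-pair classes not belonging to any self-triangle by $g-(\tau_A^*+\tau_B^*)$ in each graph. Combining these counts gives $\pitchfork_A^*+\pitchfork_B^*\leq 2g+\tau_A^*+\tau_B^*$, i.e.\ the lower bound. To complete your proof you would need to either reproduce this embedded-surface bookkeeping or find a genuinely homological substitute for the ``each triangle eats a handle'' and cross-graph obstruction steps; the checkerboard-coloring remark alone does not supply them.
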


	\begin{proof}
		The second inequality is immediate from the fact that every self-triangle consists of three self-edges which are all pairwise transverse. Furthermore, no pair can be part of any other self-triangle since the third self-edge is already determined by the pair.\\
		
		For the first inequality, we will show that $\tau_A^* + \tau_B^*\leq g$. Suppose without loss of generality that $\mathbb{G}_A$ contains a self-triangle class $(e_1, e_2, e_3)^*$ at a vertex corresponding to the loop $l$. Figure 9 shows the result of surgering $l$ along $e_1$, $e_2$, and $e_3$ to obtain the state $S$, where $e_1', e_2',$ and $e_3'$ are the surgery arcs dual to these edges. In place of the self-triangle, there is a region $R$ which is a punctured torus. Note that $\Sigma \setminus R$ then consists of two punctured surfaces with total genus $g-1$. Iterating this argument, we immediately see $\mathbb{G}_A$ can have at most $g$ self-triangle classes.\\
		
		\begin{figure}
			\centering
			\includegraphics{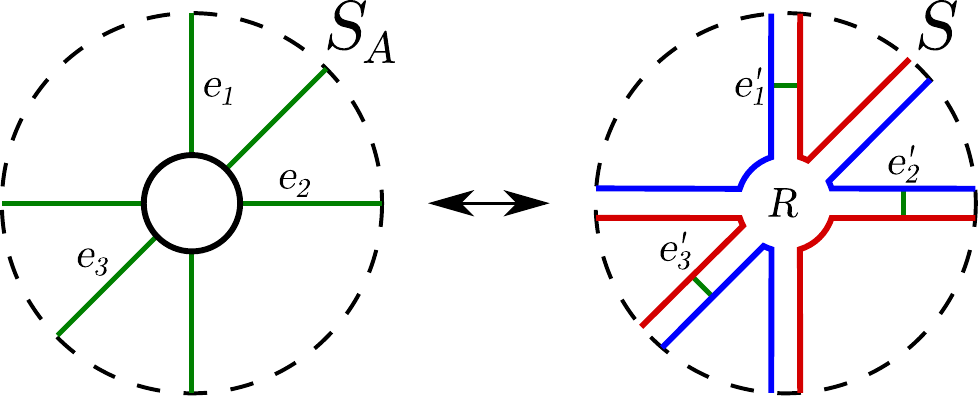}
			\caption{The result of surgering along a self-triangle. The red and blue loops bound separate punctured surfaces.}
		\end{figure}
		
		More is true, however. Notice that $e_1', e_2',$ and $e_3'$ are the only surgery arcs joining these punctured surfaces. Thus, when we complete the remaining surgeries to obtain $S_B$, $e_1', e_2',$ and $e_3'$ become identified with simple edges of $\mathbb{G}_B$. By reordering, this will also be the case for the other edges making up $(e_1, e_2, e_3)^*$. Therefore, the remaining surgeries induce an isotopy on $R$ taking it to a punctured torus in $\Sigma$ which contains no self-edges of $\mathbb{G}_B$. We have shown that the presence of a self-triangle class in either $\mathbb{G}_A$ or $\mathbb{G}_B$ obstructs the existence of a potential set of transverse self-edge classes (and by extension a self-triangle) in the other. Iterating this argument, we have $\tau_A^* + \tau_B^*\leq g$ as claimed.\\
		
		Recall that across both state graphs, there are a total of $3(\tau_A^* + \tau_B^*)$ classes of transverse pairs coming from self-triangles. By an analogous argument, in $\mathbb{G}_A$ there are at most $g-(\tau_A^*+\tau_B^*)$ pairs which are not part of any self-triangle. By contrast, these do not obstruct a transverse pair in $\mathbb{G}_B$: for an example, see the left-hand graph depicted in Figure 6 and note that it is isotopic to its dual.\\
		
		Therefore, the same bound holds for $\mathbb{G}_B$. By adding across both state graphs, we have
		\begin{align}
			\pitchfork_A^* + \pitchfork_B^* &\leq 3(\tau_A^* + \tau_B^*) + 2g - 2(\tau_A^* + \tau_B^*)\\
			&= 2g + \tau_A^* + \tau_B^*.\nonumber
		\end{align}
		
		This implies
		\begin{align}
			\tau_A^* + \tau_B^* - \pitchfork_A^* - \pitchfork_B^* &\geq -2g
		\end{align}
		as desired.
	\end{proof}

\begin{figure}[h]
	\centering
	\includegraphics[width=5in]{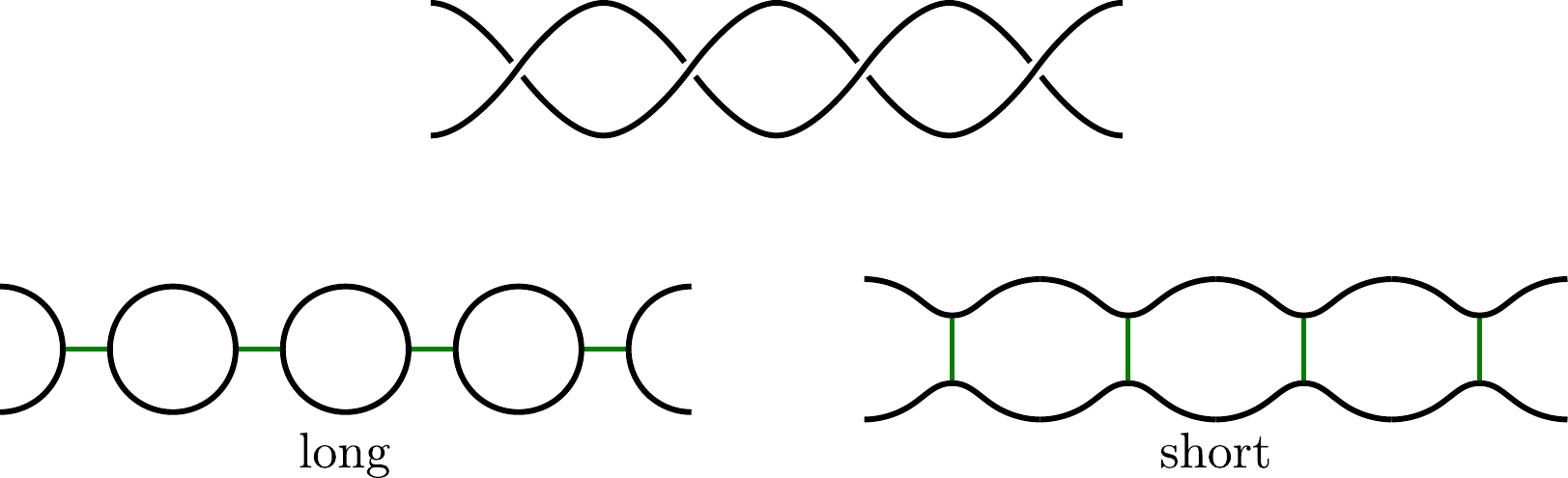}
	\caption{Left: The long resolution of a twist region. Right: The short resolution of a twist region.}
\end{figure}

In \cite{dehn_filling}, the authors bound the quantity analogous to $\star$ from above by recategorizing the vertices and edges based on how twist regions in $D$ reveal themselves in two different ways in each of the state graphs. Referring to Figure 10, we call these ``long" and ``short" resolutions. Edges are also called ``long" and ``short" if they come from that type of resolution. Notice that short edges coming from the same twist region are all in the same equivalence class. This is the exact same property exhibited by the ``reduced edges" in \cite{dehn_filling} which leads to Proposition 4.6, the upper bound in the planar case. As there is no difference in the definition of a twist region between the planar and higher-genus cases, the proof goes through without changes as long as one replaces ``reduced edges" with ``equivalence classes of edges."
	
	\begin{proposition}(Proposition 4.6 in \cite{dehn_filling})
	Let $D$ be a reduced alternating diagram on a closed, orientable surface $\Sigma$. Then,
		\begin{equation}\label{ub}
			e_A + e_B - v_A - v_B + 2 \leq 2 \ tw(D).
		\end{equation}
	\end{proposition}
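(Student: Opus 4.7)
The plan is to import Proposition~4.6 of \cite{dehn_filling} directly to the surface setting. The only substantive replacement is that the planar notion of a reduced edge (parallel multi-edges between the same pair of vertices collapsed to one edge) is replaced by an equivalence class of edges under $\sim$ or $\sim^*$, as defined in Section~\ref{comp}. This is the correct generalization because the planar collapse exactly identifies edges forming null-homologous cycles, and our quantities $e_A, e_B$ are already counting these classes by definition.

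First I would verify the local dictionary at each twist region. Referring to Figure~10, a twist region $R$ with $c_R$ crossings resolves in two ways: a long resolution yielding $c_R$ pairwise parallel surgery arcs between two state loops, and a short resolution yielding a chain of bigon loops joined by single surgery arcs. One resolution appears in $\mathbb{G}_A$ and the other in $\mathbb{G}_B$. The $c_R$ parallel arcs of a long resolution cobound bigon disks contained in $R$, hence pairwise form null-homologous cycles, so they collapse to a single equivalence class under $\sim$ (or $\sim^*$ if they happen to be self-edges). The short-resolution arcs are separated by distinct bigon vertices, so each constitutes its own equivalence class. This matches the planar dictionary between twist regions and reduced edges precisely.

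With this in hand, I would invoke the combinatorial counting from \cite{dehn_filling} verbatim. That argument is local: it bounds the contribution of each twist region to $e_A + e_B - v_A - v_B$, sums across twist regions, and accounts for the contributions of crossings not lying in any twist region. Because each step depends only on the equivalence-class structure of edges and on the definition of a twist region---both unchanged in the surface setting---the argument transfers and yields \eqref{ub}.

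The main obstacle is ensuring that no equivalence under $\sim$ or $\sim^*$ spans two distinct twist regions, since such a collision could shrink $e_A$ or $e_B$ beyond what the planar argument expects and would require separate handling. Two simple edges related by $\sim$ are incident to the same pair of state loops, and the null-homologous cycle they form must bound a subsurface lying inside the complementary regions of the corresponding state. By the maximality clause in the definition of a twist region, distinct twist regions are separated by non-bigon faces of $D$, which obstructs any such subsurface from joining arcs across two regions. An analogous argument applies to $\sim^*$-equivalent self-edges. Once this is verified, the planar proof transfers line by line to yield the desired bound.
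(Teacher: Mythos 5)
Your approach is the paper's approach: cite Proposition~4.6 of \cite{dehn_filling} and observe that its combinatorial counting survives once ``reduced edges'' are replaced by equivalence classes under $\sim$ and $\sim^*$, using that short edges from a single twist region are parallel and hence collapse to one class. The extra verification you provide is reasonable, with two small caveats. First, you have the ``long''/``short'' labels reversed relative to the paper's conventions (see Figure~10 and the discussion preceding the proposition): the \emph{short} resolution is the one producing $c$ parallel arcs that cobound bigon disks and thus lie in a single equivalence class, while the \emph{long} resolution produces the chain of $c-1$ bigon vertices with $c$ long edges. The mathematics you assign to each picture is right; only the names are swapped. Second, your closing concern—that an equivalence might identify edges from two different twist regions—is harmless for this particular inequality: any such collision only \emph{decreases} $e_A+e_B$ and hence makes the upper bound easier, so no separate handling is needed here (it matters for the lower bound, which the paper treats via Propositions~\ref{short_country} and \ref{short_all}).
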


For the lower bound, only a slight modification the methods used in the planar case is required. Let us first summarize the argument, which appears in section 4.3 of \cite{dehn_filling}. The authors first assume that there are no single-crossing twist regions, so that each region contains at least one bigon. Recall that $D$ may be viewed as a 4-valent graph $\Gamma$. They obtain a new 3-valent graph $P$ by collapsing strings of bigons in $\Gamma$ to red edges. A 2-valent graph, $\Phi$ may be obtained by deleting the red edges. Figure 11 depicts these graphs.\\

The authors call the regions of $P$ ``provinces" and the regions of $\Phi$ ``countries." In \cite{dehn_filling}, the graphs are all constructed on the Turaev surface of $D$, but since we only deal with alternating links, the graphs $P$ and $\Phi$ embed naturally on $\Sigma$. Note that $P$ inherits the cellular embedding of $D$, so all provinces are disks, while countries may be nontrivial regions. For each country, its provinces correspond to vertices in either $\mathbb{G}_A$ or $\mathbb{G}_B$ which are incident to more than two edges (the authors call these ``$n$-gon" vertices). The red edges which divide countries into provinces are dual to sets of parallel short edges which connect the two provinces. Recall that if short edges are dual to the same red edge, then they are in the same equivalence class. The converse is not true, however; there may exist short edges dual to distinct red edges which will become identified. The key to the authors' argument is to realize that these different short edges must lie in the same country. So, to find a lower bound the authors investigate the number of short edges in each country that will survive when we pass to equivalence classes. We now prove a version of their result (Lemma 4.8 in \cite{dehn_filling}).\\

\begin{figure}
	\centering
	\includegraphics[width=\textwidth]{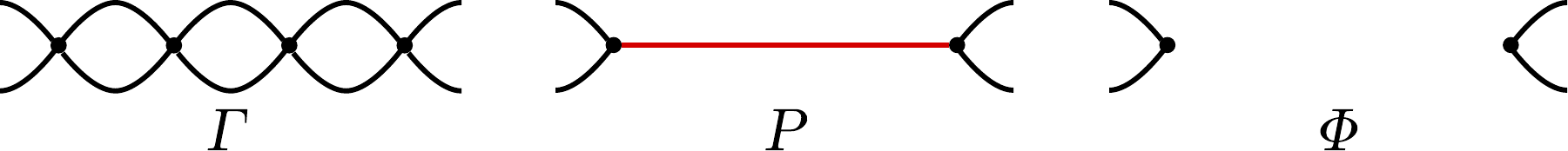}
	\caption{The graphs $\Gamma$, $P$, and $\Phi$.}
\end{figure}

\begin{proposition}\label{short_country}
	Let $D$ be a reduced alternating diagram on a closed, orientable surface $\Sigma$ such that every twist region of $D$ has at least two crossings. Let $N$ be a country of $D$, and let $e_{\text{short}}(N)$ denote the number of equivalence classes of short edges contained in $N$ and $e_{\partial}^*(N)$ the number of self-edge equivalence classes in $N$ which are homologous to a boundary component of $N$. Let $tw(N)$ denote the number of twist regions in $N$ and let $\lvert \partial N \rvert$ denote the number of loops in $\Phi$ which bound $N$. Then we have
	\begin{equation}\label{short_bound}
		e_{\text{short}}(N)\geq tw(N) + 1 - \lvert \partial N \rvert + e_{\partial}^*(N).
	\end{equation}
\end{proposition}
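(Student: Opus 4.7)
Following the strategy of Lemma 4.8 in \cite{dehn_filling}, my plan is to use an Euler-characteristic computation in $N$ refined by a spanning-tree argument on a suitable dual graph. First I would cell-decompose $N$ using the red edges of $P$ inside $N$ together with $\partial N$; each province is a disk by the cellular embedding of $D$, and since every vertex of $P$ in $N$ lies on $\partial N$ with valence three (one red edge and two $\Phi$-edges), Euler's formula yields
\begin{equation*}
p = tw(N) + \chi(N) = tw(N) + 2 - 2g(N) - \lvert \partial N \rvert
\end{equation*}
provinces, where $g(N)$ is the genus of $N$. The provinces form the vertices of a dual graph $G^*_N$ embedded in $N$, with one short edge per twist region (the hypothesis of at least three crossings per twist region ensures each twist region contributes a short edge), so $G^*_N$ has $p$ vertices and $tw(N)$ edges, and is connected because $N$ is.

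Next, I would fix a spanning tree $T$ of $G^*_N$. No two edges of $T$ join the same pair of provinces, so they represent $p - 1$ distinct $\sim$-equivalence classes. Each of the remaining $tw(N) - (p - 1) = 2g(N) + \lvert \partial N \rvert - 1$ non-tree edges may either contribute a new class or collapse to an existing one; a collapse occurs precisely when a pair of parallel short edges bounds a null-homologous bigon in $\Sigma$. Writing $c$ for the total number of such collapses, substitution gives
\begin{equation*}
e_{\text{short}}(N) \geq (p - 1) + \bigl(2g(N) + \lvert \partial N \rvert - 1 - c\bigr) = tw(N) - c.
\end{equation*}
So the claim reduces to showing $c \leq \lvert \partial N \rvert - 1 - e_{\partial}^*(N)$.

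The central estimate bounds $c$ topologically. Each collapse witnesses a bigon class in $\ker(H_1(N) \to H_1(\Sigma))$. Since $\partial N$ bounds $N$ in $\Sigma$, the $\lvert \partial N \rvert$ boundary loops span a subspace of $H_1(N)$ of dimension $\lvert \partial N \rvert - 1$ (they sum to zero), all of which lies in the kernel; meanwhile, the $2g(N)$ independent genus cycles of $N$ do not map to zero in $H_1(\Sigma)$ and thus do not produce collapses. Finally, each self-edge class counted by $e_{\partial}^*(N)$ is a curve in $N$ realizing the homology class of a boundary loop $\gamma$, and its presence obstructs one potential collapse: the parallel short edges that would have formed the null-homologous bigon dual to $\gamma$ must now route past the self-edge, breaking the null-homology. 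Assembling, $c \leq \lvert \partial N \rvert - 1 - e_{\partial}^*(N)$, which together with the inequality above yields $e_{\text{short}}(N) \geq tw(N) + 1 - \lvert \partial N \rvert + e_{\partial}^*(N)$.

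The main obstacle will be rigorously establishing the obstruction step: verifying that each self-edge class in $e_{\partial}^*(N)$ blocks exactly one collapse (with no double-counting across distinct boundary components), and that the $2g(N)$ genus cycles truly fail to produce collapses in the embedded setting. This demands a careful local analysis of how self-edges and short edges interact in the state graphs near $\partial N$, likely invoking the alternating and reduced conditions to rule out configurations where multiple self-edges align with a single boundary loop or where a genus cycle accidentally bounds through chains outside $N$.
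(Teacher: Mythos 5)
Your Euler-characteristic setup and the dual graph $G^*_N$ match the paper's ribbon graph $G$, but the paper works with a \emph{spanning quasi-tree} (a ribbon subgraph with exactly one boundary component) rather than a spanning tree. The quasi-tree has $tw(N)+1-\lvert\partial N\rvert$ edges — that is, $2g(N)$ more than your spanning tree's $p-1$ edges — and the genus of $N$ is absorbed automatically: in a quasi-tree there is no null-homologous curve separating $N$, so no two surviving red edges can carry short edges in the same $\sim$-class. You instead take the ordinary spanning tree and must separately argue that the $2g(N)$ ``genus'' non-tree edges never collapse. This is where two problems arise.

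First, your homological claim is incorrect as stated: the $\lvert\partial N\rvert$ boundary loops of a country do \emph{not} all lie in $\ker(H_1(N)\to H_1(\Sigma))$. A boundary component of $N$ can be non-separating in $\Sigma$ — these are exactly the ``bad'' curves that Proposition \ref{short_all} has to treat specially — and such a loop has non-zero image in $H_1(\Sigma)$. (Take $N$ an annular neighborhood of a meridian on the torus: $\lvert\partial N\rvert=2$ but the kernel is trivial.) The correct statement, that the kernel is \emph{contained} in the span of the boundary loops because the genus cycles inject by the intersection pairing, would actually suffice for your inequality $c\le\lvert\partial N\rvert-1$, but that is not what you wrote, and you would still need to argue that distinct collapses give independent kernel elements, which your argument does not establish.

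Second, and more seriously, the $e_\partial^*(N)$ correction is asserted rather than proved, and you say so yourself. Your proposed mechanism (``the parallel short edges must route past the self-edge, breaking the null-homology'') is not a proof and does not obviously isolate one obstruction per boundary-homologous self-edge class, nor rule out double-counting. The paper's route is cleaner: a boundary-homologous self-edge, surviving into the quasi-tree, would give a null-homologous curve separating $N$ and hence force a second boundary component — a contradiction. Therefore every such self-edge class must be dual to one of the $\lvert\partial N\rvert-1$ removed red edges, and since self-edge classes are disjoint from the simple-edge classes remaining in the quasi-tree, one adds $e_\partial^*(N)$ to the count directly. Without an argument of this type, the term $+\,e_\partial^*(N)$ in your bound is unjustified, and the inequality you prove is only the weaker $e_{\text{short}}(N)\ge tw(N)+1-\lvert\partial N\rvert$ (and even that rests on the kernel step above). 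Also, the hypothesis here is ``at least two crossings per twist region,'' not three; the three-crossing hypothesis only enters later.
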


\begin{proof}
	Consider the dual graph of $N$, defined to be a connected ribbon graph $G$ whose vertices are the provinces of $N$, edges are dual to the red edges in $N$, and boundary components are the loops in $\Phi$ which bound $N$. In \cite{dehn_filling}, the authors obtain a lower bound by finding a spanning tree of $G$, or alternatively by cutting $N$ along red edges until a disk remains. In our case, it suffices to do the same with a spanning quasi-tree of $G$.\\
	
	A \textit{quasi-tree} is a ribbon graph with exactly one boundary component. We can obtain a spanning quasi-tree by removing edges as long as they reduce the number of boundary components of the graph, which corresponds to cutting $N$ along dual red edges. Thus, it suffices to cut $N$ along $\lvert \partial N \rvert - 1$ red edges.\\
	
	Note that if $G$ is a quasi-tree, then $N$ cannot contain any short edges which are in the same equivalence class, yet are dual to different red edges. Otherwise, these short edges form a null-homologous curve separating $N$, forcing there to be at least two boundary components. Also note that there can be no self-edges which are homologous to a boundary component of $N$ for the exact same reason. Recall that a cellularly embedded diagram must be connected, so that all self-edges are short.\\
	
	Thus, any classes we are left with correspond to a unique one of the remaining $tw(N) + 1 - \lvert \partial N \rvert$ red edges left in the country. Adding back in the self-edge classes which were removed, we obtain the desired inequality.
\end{proof}

The final step in \cite{dehn_filling} is to find a bound on the number of countries. A small but important modification is needed here. In the planar case, the authors show that every component of $\Phi$ contains at least three vertices. As we will see, this does not hold in the surface case, and this issue must be resolved in order to avoid canceling the term for the twist number in the lower bound.

\begin{proposition}\label{short_all}
	Let $D$ be a reduced alternating diagram on a closed, orientable surface $\Sigma$ such that every twist region of $D$ has at least two crossings. Let $e_{\text{short}}$ denote the total number of equivalence classes of short edges in $\mathbb{G}_A$ and $\mathbb{G}_B$. Then,
	\begin{equation}
		e_{\text{short}} \geq \frac{1}{3}tw(D) + 1 - g(\Sigma).
	\end{equation}
\end{proposition}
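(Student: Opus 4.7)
The plan is to sum the bound of Proposition~\ref{short_country} over all countries and then combine this with an Euler-characteristic computation together with a vertex/edge count in $P$ and $\Phi$. Summing \eqref{short_bound} over all countries $N$ of $D$ yields
\[
e_{\text{short}} \;\geq\; tw(D) + C - 2L + \sum_N e_{\partial}^*(N),
\]
where $C$ is the number of countries and $L$ is the number of cycles of $\Phi$; here I use that each twist region lies in a unique country (so $\sum_N tw(N) = tw(D)$) and that cutting $\Sigma$ along the disjoint simple closed curves comprising $\Phi$ produces exactly $2L$ boundary circles distributed among the countries (so $\sum_N |\partial N| = 2L$). The Euler-characteristic identity $\chi(\Sigma) = \sum_N (2 - 2g(N) - |\partial N|) = 2C - 2G - 2L$, with $G := \sum_N g(N)$, rewrites as $C - L = 1 - g(\Sigma) + G$ and reduces the proposition to establishing
\[
L \;\leq\; \tfrac{2}{3}\,tw(D) + G + \sum_N e_{\partial}^*(N).
\]

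For the counting, since $P$ is 3-valent with exactly $tw(D)$ red edges and $\Phi$ is the 2-valent subgraph obtained by deleting them, the degree-sum formula yields $V(P) = E(\Phi) = 2\,tw(D)$. Writing $L_i$ for the number of cycles of $\Phi$ of length $i$, we therefore have $\sum_{i \geq 1} i L_i = 2\,tw(D)$ and $L = \sum_{i \geq 1} L_i$. If every cycle of $\Phi$ had length at least $3$, then $3L \leq 2\,tw(D)$ and the bound would be immediate; this is essentially the planar case. The surface-specific difficulty is the possible presence of length-$1$ and length-$2$ cycles in $\Phi$, which cannot occur in the plane.

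The main obstacle is therefore to show that each such short cycle is accounted for by a contribution to $G$ or to $\sum_N e_{\partial}^*(N)$. Concretely, I aim to prove
\[
G + \sum_N e_{\partial}^*(N) \;\geq\; L_1 + L_2.
\]
Granted this, the elementary estimate $2\,tw(D) = \sum_{i \geq 1} i L_i \geq L_1 + 2L_2 + 3(L - L_1 - L_2)$ rearranges to $L \leq \tfrac{2}{3}\,tw(D) + \tfrac{2}{3}L_1 + \tfrac{1}{3}L_2 \leq \tfrac{2}{3}\,tw(D) + L_1 + L_2$, closing the argument.

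For a length-$1$ cycle $\ell$ of $\Phi$ at a vertex $v$, I first argue $\ell$ must be essential in $\Sigma$: if $\ell$ bounded a disk, the enclosed subdiagram would be a reduced alternating planar diagram whose own $\Phi$-structure could only contain cycles of length at least three, contradicting the appearance of $\ell$ as a length-$1$ boundary of that disk. Granted essentiality, one then verifies that either both local sides of $\ell$ lie in a common country $N$, so that $\ell$ is non-separating there and contributes at least $1$ to $g(N)$, and hence to $G$; or, otherwise, tracing the short resolution of the twist region $r$ at $v$ exhibits a self-edge equivalence class in one of $\mathbb{G}_A$ or $\mathbb{G}_B$ at the vertex corresponding to the state loop running parallel to $\ell$. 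This class is homologous to $\ell$ in $H_1(\Sigma)$, and since $\ell$ is a boundary component of the containing country $N$, the class contributes to $e_{\partial}^*(N)$. Length-$2$ cycles admit a parallel case analysis. The most delicate step—and the principal obstacle—is verifying in each case that the claimed self-edge equivalence class exists, has the asserted homology class, and that the total count is not inflated by double-counting (distinct small cycles must be matched to distinct contributions in $G + \sum_N e_\partial^*(N)$). Once this bookkeeping is settled, assembling the pieces delivers the desired bound.
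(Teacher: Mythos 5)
Your setup — summing Proposition~\ref{short_country} over countries and then rewriting via the Euler characteristic identity $\chi(\Sigma) = 2C - 2G - 2L$ so that $C - L = 1 - g(\Sigma) + G$ — is correct and is actually a refinement of what the paper does (the paper uses the cruder bound $n(D) - |\Phi| \geq 1 - g(\Sigma)$ from counting non-separating cuts, discarding the country-genus term $G$). Your reduction of the proposition to the inequality $L \leq \tfrac{2}{3}tw(D) + G + \sum_N e_\partial^*(N)$, and then to $G + \sum_N e_\partial^*(N) \geq L_1 + L_2$, is also arithmetically sound given $V(P)=2\,tw(D)$.

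However, the proof of that last inequality is where the real geometric content lives, and you have not supplied it. Two specific problems: (i) your treatment of length-$1$ cycles is off — you argue essentiality by appealing to an ``enclosed subdiagram,'' but the exterior structure is what matters and the reasoning doesn't go through; in fact the paper simply observes $L_1 = 0$ outright, since a component of $\Phi$ with a single vertex forces a nugatory crossing, contradicting reducedness, so nothing needs to be charged against $G$ or $e_\partial^*$ here. (ii) The length-$2$ case, which is the genuine crux, is dismissed with ``a parallel case analysis.'' This is precisely where the paper invests its effort: it shows that for a two-vertex component $\phi$, cellularity forces the two incident red edges onto opposite sides of $\phi$, each such red edge is dual to a self-edge homologous to $[\phi]$, homological adequacy forces these self-edges to be essential, and a separate topological argument bounds the multiplicity of the correspondence so that $e_\partial^* \geq L_2$. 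None of this appears in your sketch, and you explicitly flag it as unresolved (``the principal obstacle,'' ``Once this bookkeeping is settled''). As written the proposal therefore has a genuine gap at the step that requires homological adequacy; the framing is a nice alternative route, but the core argument is missing.
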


\begin{proof}
	Let $\phi$ be a component of $\Phi$ which may be thought of as a simple closed curve in $\Sigma$. Since $D$ is reduced, $\phi$ must contain at least two vertices, or else it would bound a monogon region corresponding to a nugatory crossing. We call $\phi$ ``bad" if it has exactly two vertices and ``good" if it has at least three. Let $\lvert \Phi \rvert_{\text{bad}}$ denote the number of bad curves and $\lvert \Phi \rvert_{\text{good}}$ the number of good curves, and $\lvert \Phi \rvert$ the total number of curves.\\
	
	Suppose $\phi$ is bad. Then because $P$ is cellularly embedded, the two red edges incident to the vertices of $\phi$ must lie on opposite sides of $\phi$. Otherwise, $\phi$ would bound a bigon region, and all bigons were collapsed in the construction of $P$. Also note that $\Phi$ inherits the checkerboard coloring of $D$, so we see that these are indeed distinct red edges which lie in separate countries. By following along $\phi$, we see that on each side of $\phi$ the red edge forms a border between a province and itself. Thus, the red edge is dual to a self-edge which represents the same homology class as $\phi$ in $H_1(\Sigma)$. This is illustrated in Figure 12. By homological adequacy, we have that this self-edge along with $\phi$ are homologically essential. In a country $N$, there can be at most two essential boundary components which are homologous. To see this, recall that these two homologous curves separate $\Sigma$, so a third would separate $N$. Therefore, for each bad curve there are two boundary-homologous classes of self-edges in the neighboring countries, and there is at worst a 2:1 correspondence between bad curves and a given self-edge class. Therefore, $e_\partial^* \geq \lvert \Phi \rvert_{\text{bad}}$ where $e_\partial^* = \sum\limits_N e_\partial^*(N)$.\\
	
	\begin{figure}
		\centering
		\includegraphics{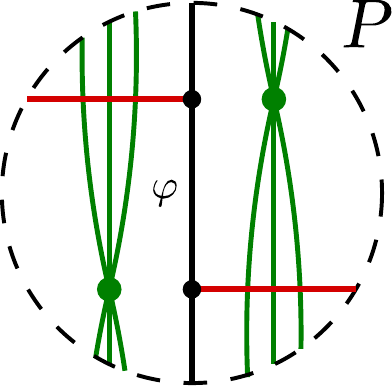}
		\caption{A bad loop. The boundary-homologous self-edges are drawn in green.}
	\end{figure}
	
	The total number of vertices in $P$ is at least three times the number of good edges. Since every red edge has two vertices, this gives us that $tw(D) \geq \frac{3}{2}\lvert \Phi \rvert_{\text{good}}$. Let $n(D)$ denote the number of countries in $\Phi$. By summing \eqref{short_bound} over all countries, we get
	\begin{align*}
		e_{\text{short}} &\geq tw(D) + n(D) - 2\lvert \Phi \rvert + e_{\partial}^*\\
		&\geq tw(D) + n(D) - 2\lvert \Phi \rvert + \lvert \Phi \rvert_{\text{bad}}\\
		&= tw(D) + n(D) - \lvert \Phi \rvert - \lvert \Phi \rvert_{\text{good}}\\
		&\geq \frac{1}{3}tw(D) + n(D) - \lvert \Phi \rvert\\
		&\geq \frac{1}{3}tw(D) + 1 - g(\Sigma).
	\end{align*}
	
	The last inequality follows from the fact that if we cut $\Sigma$ along the simple closed curves in $\Phi$, at most $g$ of these cuts are non-separating.
\end{proof}

We combine these results into a proposition mirroring Theorem 4.10 in \cite{dehn_filling}.\\

\begin{proposition}
	Let $D$ be a reduced alternating diagram on a closed, orientable surface $\Sigma$ such that every twist region of $D$ has at least three crossings. Then,
	\begin{equation}\label{lb}
		\star \geq \frac{1}{3}tw(D) + 1 - g(\Sigma)
	\end{equation}
\end{proposition}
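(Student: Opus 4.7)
The plan combines the two preceding propositions. First rewrite
\[
\star = (e_A + e_B - v_A - v_B + 2) + (\tau_A^* + \tau_B^* - \pitchfork_A^* - \pitchfork_B^*)
\]
and invoke the first inequality of \eqref{g bound} on the second parenthesis, yielding the lower bound $\geq -2g(\Sigma)$. This reduces the theorem to establishing
\[
e_A + e_B - v_A - v_B + 2 \;\geq\; \tfrac{1}{3}tw(D) + 1 + g(\Sigma).
\]

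The idea is to adapt the planar argument of Theorem 4.10 in \cite{dehn_filling}, using Proposition \ref{short_all} as the genus-aware replacement for that paper's Lemma 4.8. Partition the edge equivalence classes of $\mathbb{G}_A \cup \mathbb{G}_B$ into long and short according to the twist-region resolutions introduced before the upper-bound inequality \eqref{ub}. Because every twist region has at least three crossings, each twist region contributes exactly one long class in the state graph in which it is long-resolved, and long classes coming from distinct twist regions remain distinct under $\sim$ — this is precisely the role of the strict $n_T \geq 3$ hypothesis, and it mirrors the ``reduced edge'' analysis underlying \eqref{ub}. Thus the long-class contribution to $e_A + e_B$ equals exactly $tw(D)$, while Proposition \ref{short_all} supplies $e_{\text{short}} \geq \tfrac{1}{3}tw(D) + 1 - g(\Sigma)$ for the short contribution.

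The main obstacle is to offset these edge-class contributions against $v_A + v_B$ in a way that recovers the correct $+g(\Sigma)$ on the right-hand side. In the planar setting \cite{dehn_filling} handles this by an Euler-characteristic argument on the country/province decomposition already deployed in Proposition \ref{short_country}. In the surface setting the same decomposition picks up an additional $g(\Sigma)$ coming from the non-separating components among the simple closed curves of $\Phi$, exactly the correction exploited in the closing step of the proof of Proposition \ref{short_all}. Chaining these refined vertex and edge counts with the two lower bounds above should produce the required inequality and complete the proof.
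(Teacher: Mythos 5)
Your high-level plan is sound and essentially rearranges the same chain the paper uses: split $\star$ as $(e_A + e_B - v_A - v_B + 2) + (\tau_A^* + \tau_B^* - \pitchfork_A^* - \pitchfork_B^*)$, bound the second parenthesis below by $-2g$ via \eqref{g bound}, and show the first parenthesis is at least $\tfrac{1}{3}tw(D) + 1 + g(\Sigma)$. That target inequality is exactly what the paper's identity $e_A + e_B - v_A - v_B + 2 = e_{\text{short}} + 2g(\Sigma)$ plus Proposition \ref{short_all} delivers. However, the execution has two problems.

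First, the claim that ``each twist region contributes exactly one long class,'' so the long-class contribution to $e_A + e_B$ is $tw(D)$, is not correct. A twist region with $c$ crossings that is long-resolved produces $c$ long edges in a chain and $c-1$ bigon vertices; the role of the at-least-three-crossings hypothesis is to guarantee that no two long edges are $\sim$-equivalent, so the long-class count is $e_{\text{long}} = c(D)$, not $tw(D)$. The quantity that does equal $tw(D)$ is the net contribution $e_{\text{long}} - v_{\text{bigon}} = c(D) - (c(D) - tw(D))$, and you must keep both terms so the $c(D)$'s cancel when you regroup $e_A + e_B - v_A - v_B = e_{\text{short}} + e_{\text{long}} - v_{\text{bigon}} - v_{\text{n-gon}}$.

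Second, and more importantly, the ``offset'' step you flag as the main obstacle is exactly where the proof has to be carried out, and it is not. After the regrouping you are left with $e_{\text{short}} + tw(D) - v_{\text{n-gon}}$, and the identity needed to close this is $\chi(\Sigma) = v_{\text{n-gon}} - tw(D)$, which comes from an Euler-characteristic count on the cellularly embedded $3$-valent graph $P$ (its vertices, red edges, non-red edges, and disk provinces). You attribute the $+g(\Sigma)$ on the right-hand side of your reduced target to ``the non-separating components among the simple closed curves of $\Phi$,'' but that is the source of the $-g(\Sigma)$ \emph{inside} Proposition \ref{short_all}, a separate estimate. The $+2g(\Sigma)$ you actually need comes from $2 - \chi(\Sigma)$ in the identity above; Proposition \ref{short_all} is then applied on top of it. Conflating these two genus corrections leaves the argument without the key equality, and ``chaining refined vertex and edge counts'' does not close the gap until $\chi(\Sigma) = v_{\text{n-gon}} - tw(D)$ is derived.
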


\begin{proof}
	Like the authors in \cite{dehn_filling}, we define $v_{\text{bigon}}$ as the total number of bigon vertices in $\mathbb{G}_A$ and $\mathbb{G}_B$. Bigon vertices are incident to exactly two edges and correspond to the vertices in the long resolution of a twist region. We define $v_{\text{n-gon}}$ to be the total number of $n$-gon vertices. We have already seen that $e_{\text{short}}$ counts the total number of equivalence classes of short-edges, which are edges coming from a short resolution. Likewise, we define $e_{\text{long}}$ to be the total number of equivalence classes of edges coming from a long resolution. We remark that an edge is long if and only if it is incident to at least one bigon vertex. (When every twist region has at least three crossings, no two long edges can be in the same equivalence class.) Every vertex and edge across both state graphs falls into exactly one of these categories, so we can regroup $e_A + e_B - v_A - v_B = e_{\text{short}} + e_{\text{long}} -v_{\text{bigon}} - v_{\text{n-gon}}$.\\
	
	The long resolution of a twist region with $c$ crossings has exactly $c-1$ bigon vertices and $c$ long edges. By summing over every twist region, we see that $v_{\text{bigon}} = c(D)-t(D)$ and $e_{\text{long}} = c(D)$. Recall that the $n$-gon vertices become provinces in $P$, and the twist regions become red edges. Therefore, by summing over countries, we can compute $\chi(\Sigma) = v_{\text{n-gon}} - tw(D)$. Putting this all together,
	\begin{align*}
		\star &= e_A + e_B - v_A - v_B + 2 + \tau_A^* + \tau_B^* - \pitchfork_A^* - \pitchfork_B^*\\
		&= e_{\text{short}} + e_{\text{long}} -v_{\text{bigon}} - v_{\text{n-gon}} + 2 + \tau_A^* + \tau_B^* - \pitchfork_A^* - \pitchfork_B^*\\
		&= e_{\text{short}} + c(D) -(c(D) - tw(D)) -v_{\text{n-gon}}+ 2 + \tau_A^* + \tau_B^* - \pitchfork_A^* - \pitchfork_B^*\\
		&= e_{\text{short}} - \chi(\Sigma) + 2 + \tau_A^* + \tau_B^* - \pitchfork_A^* - \pitchfork_B^*\\
		&\geq e_{\text{short}} - \chi(\Sigma) + 2 -2g(\Sigma)\\
		&= e_{\text{short}}\\
		&\geq \frac{1}{3}tw(D) + 1 - g(\Sigma).
	\end{align*}
\end{proof}

Combining \eqref{ub} and \eqref{lb}, we obtain a proof of Theorem \ref{main theorem}.\\

In \cite{links_surfaces}, the authors bound the hyperbolic volume of the complement of a ``weakly generalized" alternating link in terms of the twist number. A weakly generalized alternating link is a link in a compact, irreducible, orientable 3-manifold with a projection onto a surface $\Sigma$ with some additional properties that guarantee the complement admits a complete hyperbolic structure. They also require the diagram to be twist-reduced. See \cite{links_surfaces} for the specific definitions of these terms.

\begin{theorem}(Theorem 1.4 in \cite{links_surfaces})
	Let $\Sigma$ be a closed orientable surface of genus at least one, and let $L$ be a link that admits a twist-reduced weakly generalized cellularly embedded alternating projection $D$ onto $\Sigma \times \{0\}$ in $Y=\Sigma \times [-1,1]$. Then the interior of $Y\setminus L$ admits a hyperbolic structure. If $\Sigma$ is a torus, then we have
	\begin{equation}
		\frac{v_{\text{oct}}}{2}\cdot tw(D) \leq vol(Y\setminus L) < 10v_{\text{tet}}\cdot tw(D),
	\end{equation}
	where $v_{\text{tet}}$ is the volume of a regular ideal tetrahedron, and $v_{\text{oct}}$ is the volume of a regular ideal octahedron.\\
	
	If $\Sigma$ has genus at least two,
	\begin{equation}
		\frac{v_{\text{oct}}}{2}\cdot (tw(D)-3\chi(\Sigma)) \leq vol(Y\setminus L) < 6v_{\text{oct}}\cdot tw(D).
	\end{equation}
\end{theorem}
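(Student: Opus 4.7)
The plan is to establish hyperbolicity and the two volume bounds separately, following the philosophy of Lackenby--Agol--Thurston adapted to the surface setting.

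\textbf{Existence of a hyperbolic structure.} First I would verify that $Y \setminus L$ is irreducible, atoroidal, and anannular, so that Thurston's hyperbolization theorem applies. The weakly generalized alternating hypothesis, combined with the cellular embedding and the twist-reduced condition, should rule out essential spheres, tori, and annuli via innermost disk and outermost arc arguments in the spirit of Menasco, adapted from $S^2$ to general $\Sigma$. The alternating structure on each complementary face of $D$ is what prevents an essential surface from intersecting a face in a trivial arc.

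\textbf{Lower bound via guts and Agol's inequality.} The key tool is Agol's inequality: for an essential surface $F$ in a hyperbolic 3-manifold $M$,
\begin{equation}
vol(M) \geq -\tfrac{v_{\text{oct}}}{2}\,\chi\bigl(\mathrm{guts}(M \;|\; F)\bigr).
\end{equation}
I would take $F$ to be one of the two checkerboard surfaces of $D$. Generalizing Menasco--Thistlethwaite, such surfaces are essential in $Y \setminus L$ when $D$ is a twist-reduced weakly generalized alternating diagram. I would then identify the characteristic $I$-bundle of the cut-open manifold: it should retract onto the strings of bigons (the long resolutions of twist regions), so the guts consist of what remains after collapsing these bigon chains. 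A careful Euler characteristic count, using that $P$ has $tw(D)$ edges and provinces in bijection with $n$-gon vertices, should yield $-\chi(\mathrm{guts}) \geq tw(D)$ on the torus and $-\chi(\mathrm{guts}) \geq tw(D) - 3\chi(\Sigma)$ for higher genus, the correction arising from non-disk complementary regions of $D$ in $\Sigma$. Substituting into Agol's bound gives the lower bound.

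\textbf{Upper bound via augmentation.} For the upper bound I would construct a ``fully augmented'' link $L^+$ by inserting an unknotted crossing circle around each twist region of $D$, and then unknotting the twist regions. Thurston's Dehn filling theorem guarantees $vol(Y \setminus L) \leq vol(Y \setminus L^+)$, since $L$ is obtained from $L^+$ by Dehn filling along the crossing circles. The complement $Y \setminus L^+$ admits an explicit decomposition into ideal polyhedra (two per face of $D$), and the number of polyhedra is controlled linearly by $tw(D)$. Bounding the volume of each ideal polyhedron by the appropriate maximal regular ideal polyhedron produces the constants $10 v_{\text{tet}}$ (torus, via triangulations of ideal bipyramids) and $6 v_{\text{oct}}$ (higher genus, where the Euler characteristic allows larger faces).

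\textbf{Main obstacle.} I expect the guts computation in the higher-genus case to be the hardest step. Identifying the characteristic $I$-bundle requires classifying essential annuli in the cut-open complement, and one must control contributions from non-disk complementary regions of $D$ in $\Sigma$. This is precisely where the twist-reduced hypothesis enters and where the $-3\chi(\Sigma)$ correction originates; without it, long essential annuli wrapping around the higher-genus topology could inflate the $I$-bundle and spuriously shrink the guts.
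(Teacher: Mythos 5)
This statement is not proved in the present paper: it is imported verbatim from Kalfagianni and Purcell (Theorem 1.4 of \cite{links_surfaces}), and the paper uses it only as a black box to pass from Theorem \ref{main theorem} to Corollary \ref{vol bound} by direct substitution into the volume inequalities. There is therefore no internal proof against which to compare your sketch; what you are reconstructing is the argument of the cited reference, not anything appearing in this document.

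As a reconstruction of that external argument, your sketch has the right overall shape but a few imprecisions worth flagging. Hyperbolicity is not rederived from scratch in \cite{links_surfaces}: it is inherited from earlier work of Howie and Purcell, which is where the Menasco-style innermost-disk and outermost-arc analysis you allude to actually lives. The inequality you invoke for the lower bound is the Agol--Storm--Thurston theorem (not Agol alone), and the guts computation is indeed carried out for a checkerboard surface with the bigon chains absorbed into the characteristic $I$-bundle; your heuristic that the $-3\chi(\Sigma)$ correction reflects non-disk complementary regions and higher-genus annuli is the right intuition. For the upper bound, your fully-augmented-link proposal does recover the classical $10 v_{\text{tet}}$ constant of Lackenby--Agol--Thurston in the torus case, but for genus at least two the argument in \cite{links_surfaces} proceeds through a direct volume estimate on the angled chunks and bipyramids of the polyhedral decomposition of $Y\setminus L$ rather than through Dehn filling of an augmented link; that is the source of the $6 v_{\text{oct}}$ constant and explains why it has a different form than the torus-case constant. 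In short, your outline conflates two distinct upper-bound strategies and elides that hyperbolicity is itself an imported ingredient, but none of this bears on the paper at hand, which simply quotes the theorem.
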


Direct substitution of \eqref{bounds} into these volume bounds yields Corollary \ref{vol bound}.

\newpage

\printbibliography

\end{document}